\renewcommand{\c}{\mathfrak{c}}
\newcommand{\filt}{\mathcal{F}}
\newcommand{\powerw}{\mathcal{P}(\omega)}
\newcommand{\inte}[2][X]{\mathrm{int}_{#1}(#2)}
\newcommand{\cl}[2][X]{\mathrm{cl}_{#1}(#2)}
\newcommand{\pair}[1]{\langle #1 \rangle}
\newcommand{\fr}{\mathcal{F}_r}
\renewcommand{\b}{\mathfrak{b}}
\newcommand{\B}{\mathcal{B}}
\newcommand{\C}{\mathcal{C}}
\newcommand{\Z}{\mathbb{Z}}
\newcommand{\MA}{\mathbf{MA}}
\newtheoremstyle{theorem}
     {11pt}
     {11pt}
     {}
     {}
     {\bfseries}
     {}
     {.5em}
     {\noindent\thmnumber{#2}. \thmname{#1}\thmnote{#3}}
\theoremstyle{theorem}
\newtheorem{ques}{Question}[section]
\newtheorem{lemma}[ques]{Lemma}
\newtheorem{propo}[ques]{Proposition}
\newtheorem{thm}[ques]{Theorem}
\title{Reversible filters}
\author[Dow]{Alan Dow}
\author[Hern\'andez-Guti\'errez]{Rodrigo Hern\'andez-Guti\'errez}
\address{Department of Mathematics and Statistics, University of North Carolina at Charlotte, Charlotte NC 28223}
\email[Dow]{adow@uncc.edu}
\email[Hern\'andez-Guti\'errez]{rodrigo.hdz@gmail.com}
\subjclass[2010]{54A10, 54D35, 54G05}
\keywords{reversible space, \v Cech-Stone compactification, filter}
\begin{document}

\begin{abstract}
A space is reversible if every continuous bijection of the space onto itself is a homeomorphism. In this paper we study the question of which countable spaces with a unique non-isolated point are reversible. By Stone duality, these spaces correspond to closed subsets in the \v Cech-Stone compactification of the natural numbers $\beta\omega$. From this, the following natural problem arises: given a space $X$ that is embeddable in $\beta\omega$, is it possible to embed $X$ in such a way that the associated filter of neighborhoods defines a reversible (or non-reversible) space? We give the solution to this problem in some cases. It is especially interesting whether the image of the required embedding is a weak $P$-set.
\end{abstract}

\maketitle

\section{Introduction}

A topological space $X$ is reversible if every time that $f:X\to X$ is a continuous bijection, then $f$ is a homeomorphism. This class of spaces was defined in \cite{revers_first}, where some examples of reversible spaces were given. These include compact spaces, Euclidean spaces $\mathbb{R}\sp{n}$ (by the Brouwer invariance of domain theorem) and the space $\omega\cup\{p\}$, where $p$ is an ultrafilter, as a subset of $\beta\omega$. This last example is of interest to us. 

Given a filter $\filt\subset\powerw$, consider the space $\xi(\filt)=\omega\cup\{\filt\}$, where every point of $\omega$ is isolated and every neighborhood of $\filt$ is of the form $\{\filt\}\cup A$ with $A\in\filt$. Spaces of the form $\xi(\filt)$ have been studied before, for example by Garc\'ia-Ferreira and Uzc\'ategi (\cite{filtspace1} and \cite{filtspace2}). When $\filt$ is the Fr\'echet filter, $\xi(\filt)$ is homeomorphic to a convergent sequence, which is reversible; when $\filt$ is an ultrafilter it is easy to prove that $\xi(\filt)$ is also reversible, as mentioned above. Also, in \cite[section 3]{hered-revers}, the authors of that paper study when $\xi(\filt)$ is reversible for filters $\filt$ that extend to precisely a finite family of ultrafilters (although these results are expressed in a different language).

Let us say that a filter $\filt\subset\powerw$ is reversible if the topological space $\xi(\filt)$ is reversible. It is the objective of this paper to study reversible filters. First, we give some examples of filters that are reversible and others that are non-reversible, besides the trivial ones considered above. Due to Stone duality, every filter $\filt$ on $\omega$ gives rise to a closed subset $K_\filt\subset \omega\sp\ast=\beta\omega\setminus\omega$ (defined below). Then our main concern is to try to find all possible topological types of $K_\filt$ when $\filt$ is either reversible or non-reversible. Our results are as follows.

\begin{itemize}
 \item Given any compact space $X$ embeddable in $\beta\omega$, there is a reversible filter $\filt$ such that $X$ is homeomorphic to $K_\filt$. (Theorem \ref{omegaastrev})
 \item Given any compact, extremally disconnnected space $X$ embeddable in $\beta\omega$, there is a non-reversible filter $\filt$ such that $X$ is homeomorphic to $K_\filt$. (Theorem \ref{EDnonrev})
 \item If $X$ is a compact, extremally disconnnected space that can be embedded in $\omega\sp\ast$ as a weak $P$-set and $X$ has a proper clopen subspace homeomorphic to itself, then there is a non-reversible filter $\filt$ such that $X$ is homeomorphic to $K_\filt$ and $K_\filt$ is a weak $P$-set of $\omega\sp\ast$. (Theorem \ref{weakPnotrev})
 \item There is a compact, extremally disconnnected space $X$ that can be embedded in $\omega\sp\ast$ as a weak $P$-set and every time $\filt$ is a filter with $X$ homeomorphic to $K_\filt$ and $K_\filt$ is a weak $P$-set, then $\filt$ is reversible. (Theorem \ref{example})
 \item Given any compact, extremally disconnected space $X$ that is a continuous image of $\omega\sp\ast$, there is a reversible filter $\filt$ such that $X$ is homeomorphic to $K_\filt$ and $K_\filt$ is a weak $P$-set of $\omega\sp\ast$. (Theorem \ref{weakPrev})
 \end{itemize}
 
 Also, in section \ref{MAsection}, using Martin's axiom, we improve some of the results above by constructing filters $\filt$ such that $K_\filt$ is a $P$-set.

\section{Preliminaries and a characterization}

Recall that $\beta\omega$ is the Stone space of all ultrafilters on $\omega$ and $\omega\sp\ast=\beta\omega\setminus\omega$ is the space of free ultrafilters. We will assume the reader's familiarity of most of the facts about $\beta\omega$ from \cite{vanmillhandbook}.  Recall that a space is an $F$-space if every cozero set is $C\sp\ast$-embedded. Since $\omega\sp\ast$ is an $F$-space we obtain some interesting properties. For example, every closed subset of $\omega\sp\ast$ of type $G_\delta$ is regular closed and every countable subset of $\omega\sp\ast$ is $C\sp\ast$-embedded. We will also need the more general separation property.

\begin{thm}\cite[3.3]{vdhandbook}\label{gaps}
Let $\B$ and $\C$ be collections of clopen sets of $\omega\sp\ast$ such that $\B\cup\C$ is pairwise disjoint, $|\B|<\b$ and $\C$ is countable. Then there exists a non-empty clopen set $C$ such that $\bigcup\B\subset C$ and $(\bigcup\C)\cap C=\emptyset$.
\end{thm}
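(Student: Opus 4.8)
The plan is to translate the statement into the combinatorics of $\powerw$ modulo finite sets and then run a domination argument powered by $\b$. Recall the standard dictionary for clopen subsets of $\omega\sp\ast$: every clopen set has the form $A\sp\ast:=\cl[\beta\omega]{A}\cap\omega\sp\ast$ for some $A\subseteq\omega$; moreover $A\sp\ast\neq\emptyset$ iff $A$ is infinite, $A\sp\ast\cap B\sp\ast=\emptyset$ iff $A\cap B$ is finite, and $A\sp\ast\subseteq B\sp\ast$ iff $A\setminus B$ is finite. So writing $\B=\{A_\alpha\sp\ast:\alpha<\kappa\}$ with $\kappa<\b$ and $\C=\{B_n\sp\ast:n<\omega\}$, the hypothesis tells us that $A_\alpha\cap B_n$ is finite for all $\alpha$ and $n$ (this is the only part of ``pairwise disjoint'' the argument uses), and it suffices to find an infinite $C\subseteq\omega$ with $A_\alpha\setminus C$ finite for every $\alpha$ and $C\cap B_n$ finite for every $n$; then $C\sp\ast$ is the desired clopen set.

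First I would normalize $\C$: replacing $B_n$ by $B_0\cup\cdots\cup B_n$ we may assume $\pair{B_n:n<\omega}$ is $\subseteq$-increasing, which costs nothing, since a finite union of sets each meeting $A_\alpha$ finitely still meets $A_\alpha$ finitely, and the requirement on $\C$ is unchanged. Then for each $\alpha<\kappa$ I would set $f_\alpha(n)=1+\max(A_\alpha\cap B_n)$ (and $f_\alpha(n)=0$ if that intersection is empty); this is a well-defined element of $\omega\sp\omega$ precisely because $A_\alpha\cap B_n$ is finite. Since $\kappa<\b$, the family $\{f_\alpha:\alpha<\kappa\}$ is bounded in $(\omega\sp\omega,\le\sp\ast)$, so I would fix a nondecreasing $f$ with $f_\alpha\le\sp\ast f$ for all $\alpha$, together with $N_\alpha<\omega$ such that $f_\alpha(n)\le f(n)$ whenever $n\ge N_\alpha$, and then define
\[
C=\{m\in\omega:\ m\notin B_n\text{ for every }n\text{ with }f(n)\le m\}.
\]
Immediately $C\cap B_n\subseteq\{0,\dots,f(n)-1\}$ is finite. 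For $A_\alpha\setminus C$: the set $A_\alpha\cap\bigcup_{n<N_\alpha}B_n$ is finite, and I claim every $m\in A_\alpha$ outside it lies in $C$. If such an $m$ lies in no $B_n$ this is clear; otherwise, since the $B_n$ increase, $n_m:=\min\{n:m\in B_n\}\ge N_\alpha$, so from $m\in A_\alpha\cap B_{n_m}$ we get $m<f_\alpha(n_m)\le f(n_m)$, hence $m<f(n)$ for all $n\ge n_m$ (as $f$ is nondecreasing), while $m\notin B_n$ for $n<n_m$ by minimality — so no $n$ can witness $m\notin C$. Thus $A_\alpha\subseteq\sp\ast C$. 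Non-emptiness of $C\sp\ast$ is automatic whenever some $A_\alpha$ is infinite; the remaining degenerate case, in which every member of $\B$ is empty, reduces to finding a pseudo-intersection of the decreasing sequence $\langle\omega\setminus B_n:n<\omega\rangle$ and goes through as soon as $\bigcup\C\neq\omega\sp\ast$, so that these sets are infinite.

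The construction has no single deep step; the real issue is guessing the right invariants so that the two demands on $C$ are met at once — letting $f_\alpha(n)$ record exactly how far into $A_\alpha$ one must travel to leave $B_n$, and then defining $C$ by the single uniform threshold $f$ obtained from $\kappa<\b$. The one verification requiring care is $A_\alpha\subseteq\sp\ast C$: one must absorb the finitely many ``bad'' levels $n<N_\alpha$ into the finite error, and use monotonicity of both $\pair{B_n:n<\omega}$ and $f$ to collapse the infinitely many clauses in the definition of $C$ down to the single inequality $m<f(n_m)$.
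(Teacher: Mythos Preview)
The paper does not prove this theorem at all: it is quoted in the preliminaries as \cite[3.3]{vdhandbook} and used as a black box thereafter, so there is no ``paper's own proof'' to compare against.

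Your argument is the standard one (essentially Rothberger's domination argument showing there are no $(\kappa,\omega)$-gaps in $\powerw/\mathrm{fin}$ for $\kappa<\b$) and is correct in the main case. One small correction to your closing remark on the degenerate situation: when every member of $\B$ is empty, it is not enough that $\bigcup\C\neq\omega\sp\ast$ --- you need $\bigcup\C$ to be non-dense. For instance, if two members of $\C$ are $A\sp\ast$ and $(\omega\setminus A)\sp\ast$ for an infinite coinfinite $A$, then after your normalization the sets $\omega\setminus B_n$ are eventually finite and no pseudo-intersection exists. This is really a defect of the statement as quoted (the ``non-empty'' clause silently presumes $\B$ has a non-empty member) rather than of your proof, and in every application the paper makes of this result $\B$ does contain non-empty clopen sets.
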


We will be considering spaces embeddable in $\beta\omega$. There is no ZFC characterization of spaces embeddable in $\beta\omega$ but we have the following embedding results. A space is extremally disconnnected (ED, for short) if the closure of every open subset is open.

\begin{thm}
\begin{itemize}
 \item \cite[1.4.4]{vanmillhandbook} Under CH, any closed suspace of $\omega\sp\ast$ can be embedded as a nowhere dense $P$-set.
 \item \cite[1.4.7]{vanmillhandbook} Every compact, $0$-dimensional ED space of weight $\leq\c$ embedds in $\omega\sp\ast$.
 \item \cite[3.5]{vanmillhandbook}, \cite{dow-BN} If $X$ is an ED space and a continuous image of $\omega\sp\ast$, then $X$ can be embedded in $\omega\sp\ast$ as a weak $P$-set.
\end{itemize}
\end{thm}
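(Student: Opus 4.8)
All three items are classical; the statement just collects them from \cite{vanmillhandbook} and \cite{dow-BN}, so a ``proof'' is really a matter of recalling the standard arguments. Let me indicate how each goes and which is the delicate one.

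For the second item the plan is to use Stone duality together with the injectivity of complete Boolean algebras. A compact $0$-dimensional space $X$ is homeomorphic to a closed subspace of $\omega\sp\ast$ precisely when $B:=\mathrm{Clop}(X)$ is a quotient of $\powerw/\mathrm{fin}$, so it suffices to produce such a quotient map. Since $X$ is extremally disconnected, $B$ is complete, hence injective in the category of Boolean algebras (Sikorski); since $w(X)\le\c$, we have $|B|\le\c$, so $B$ is a quotient of the free Boolean algebra $\mathrm{Fr}(\c)$ on $\c$ generators. Fixing an independent family on $\omega$ of size $\c$ — which is still independent modulo finite — embeds $\mathrm{Fr}(\c)$ into $\powerw/\mathrm{fin}$; by injectivity of $B$ the surjection $\mathrm{Fr}(\c)\twoheadrightarrow B$ extends along this embedding to a homomorphism $\powerw/\mathrm{fin}\to B$, which remains onto because its restriction to $\mathrm{Fr}(\c)$ already is. Dualizing gives the closed embedding of $X$ into $\omega\sp\ast$.

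For the first item I would recall that a closed $K\subseteq\omega\sp\ast$ is a $P$-set exactly when its dual filter on $\omega$ is a $P$-filter (every countable subfamily has a pseudo-intersection in it) and is nowhere dense exactly when its dual ideal is dense in $\powerw/\mathrm{fin}$. Under CH there are only $\c=\omega_1$ clopen subsets of $\omega\sp\ast$, hence only $\omega_1$ many $G_\delta$-sets and countable subsets to worry about, so one can run a transfinite recursion of length $\omega_1$ that builds a homeomorphic copy of $K$: at stage $\xi$ one invokes Theorem \ref{gaps} (legitimate since $\b=\omega_1$ under CH) and the $F$-space property of $\omega\sp\ast$ to push the partial copy inside, or away from, the $\xi$-th $G_\delta$-set while keeping a point of the $\xi$-th clopen set outside the copy; diagonalizing over these tasks yields an embedding whose image is a nowhere dense $P$-set.

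For the third item the embedding itself is free: a compact extremally disconnected space is a projective object in the category of compact Hausdorff spaces (Gleason), so any continuous surjection $\omega\sp\ast\to X$ splits and the section is a closed embedding. The substantive point — and the main obstacle — is that the copy can be arranged to be a \emph{weak} $P$-set, i.e.\ to meet the closure of no countable subset of its complement. This is where I would lean on \cite{dow-BN}: one builds the section simultaneously with a sufficiently generic independent (``matrix'') family on $\omega$ so that every candidate countable subset of the complement is cut off from the copy by a clopen set of that family. Taming all $2\sp\c$ potential countable subsets in ZFC is exactly the hard part, and I would use \cite{dow-BN} as a black box for it.
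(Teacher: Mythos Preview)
The paper does not prove this theorem at all: it is stated in the preliminaries section purely as a collection of known results, with explicit citations to \cite[1.4.4, 1.4.7, 3.5]{vanmillhandbook} and \cite{dow-BN}, and no \texttt{proof} environment follows it. So there is nothing in the paper to compare your proposal against.

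That said, your sketches are reasonable outlines of the standard arguments behind those citations. The second item via Sikorski injectivity and an independent family is exactly the textbook route. For the first item your description is a bit vague (``push the partial copy inside, or away from, the $\xi$-th $G_\delta$-set'' is more of a slogan than a step), but the overall shape---a length-$\omega_1$ recursion under CH that diagonalizes against all $G_\delta$'s and clopens---is right. For the third item you correctly identify Gleason projectivity as giving the embedding for free and the weak $P$-set property as the hard part; using \cite{dow-BN} as a black box there is appropriate, and in fact the paper later unpacks precisely that machinery (the $\c$-OK technique and Lemma~\ref{OKlemma}) when it needs a refined version in the proof of Theorem~\ref{weakPrev}. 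One small correction: there are $\c\sp\omega=\c$, not $2\sp\c$, countable subsets of $\omega\sp\ast$ to worry about, which is what makes a length-$\c$ recursion feasible in ZFC.
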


Given $A\subset\omega$, we denote $\cl[\beta\omega]{A}\cap\omega\sp\ast=A\sp\ast$. Also, if $f:\omega\to\omega$ is any bijection, there is a continuous extension $\beta f:\beta\omega\to\beta\omega$ which is a homeomorphism; denote $f\sp\ast=\beta f\!\!\restriction_{\omega\sp\ast}$.

The Fr\'echet filter is the filter $\fr=\{A:\omega\setminus A\in[\omega]\sp{<\omega}\}$ of all cofinite subsets of $\omega$ and we will always assume that our filters extend the Fr\'echet filter. Each filter $\filt\subset\powerw$ defines a closed set $K_\filt=\{p\in\beta\omega:\filt\subset p\}$ that has the property that $A\in\filt$ iff $K_\filt\subset A\sp\ast$ and moreover, $K_\filt=\bigcap\{A\sp\ast:A\in\filt\}$.   Notice that $\xi(\filt)$ is the quotient space of $\omega\cup K_\filt\subset\beta\omega$ when $K_\filt$ is shrunk to a point. The first thing we will do is to find a characterization of reversible filters in terms of continuous maps of $\beta\omega$.

\begin{lemma}\label{revremainder}
Let $\filt$ be a filter on $\omega$. Then $\filt$ is not reversible if and only if there is a bijection $f:\omega\to\omega$ such that $f\sp\ast[K_\filt]$ is a proper subset of $K_\filt$.
\end{lemma}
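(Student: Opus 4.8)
The plan is to make the correspondence between continuous self-bijections of $\xi(\filt)$ and permutations of $\omega$ completely explicit, and then to transfer everything to $\beta\omega$ by Stone duality. First I would observe that if $F:\xi(\filt)\to\xi(\filt)$ is a continuous bijection then $F(\filt)=\filt$: since $\filt$ extends $\fr$, no singleton is a neighbourhood of $\filt$, whereas every $n\in\omega$ is isolated, so $F(\filt)\in\omega$ would force the non-open set $\{\filt\}=F^{-1}[\{F(\filt)\}]$ to be open. Hence $f:=F\!\restriction_\omega$ is a permutation of $\omega$, and conversely every permutation $f$ of $\omega$ extends to the unique bijection $\hat f$ of $\xi(\filt)$ fixing $\filt$. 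Writing $f[\filt]=\{f[A]:A\in\filt\}$ (a filter extending $\fr$, since $f$ is a bijection), the only point at which continuity of $\hat f$ is in doubt is $\filt$, and $\hat f^{-1}[\{\filt\}\cup A]=\{\filt\}\cup f^{-1}[A]$, so $\hat f$ is continuous iff $f^{-1}[A]\in\filt$ for every $A\in\filt$, which is easily seen to be equivalent to $\filt\subseteq f[\filt]$. Likewise $\hat f$, being a bijection, is a homeomorphism iff it is open, iff $f[A]\in\filt$ for every $A\in\filt$, iff $f[\filt]\subseteq\filt$. So $\hat f$ is a continuous bijection which fails to be a homeomorphism precisely when $\filt\subsetneq f[\filt]$, and therefore $\filt$ is non-reversible iff some permutation $f$ of $\omega$ satisfies $\filt\subsetneq f[\filt]$.

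The second step is to identify $f\sp\ast[K_\filt]$ with $K_{f[\filt]}$. For a permutation $f$ of $\omega$ and $p\in\omega\sp\ast$ one has $\beta f(p)=\{f[A]:A\in p\}$, and since $A\in p\iff f[A]\in\beta f(p)$ and $\beta f$ is a bijection of $\omega\sp\ast$, it follows that
\[
f\sp\ast[K_\filt]=\{\beta f(p):\filt\subseteq p\}=\{q\in\omega\sp\ast:f[\filt]\subseteq q\}=K_{f[\filt]}.
\]
Now Stone duality — the fact that every filter extending $\fr$ is the intersection of the free ultrafilters containing it — gives $K_{\mathcal G}\subseteq K_{\mathcal H}\iff \mathcal H\subseteq\mathcal G$ for such filters, with equality on one side iff equality on the other. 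Applying this with $\mathcal G=f[\filt]$ and $\mathcal H=\filt$, the inclusion $f\sp\ast[K_\filt]\subseteq K_\filt$ holds iff $\filt\subseteq f[\filt]$, and it is proper iff the filter inclusion is. Combining this with the first paragraph, $\filt$ is non-reversible iff there is a permutation $f$ of $\omega$ with $f\sp\ast[K_\filt]\subsetneq K_\filt$, which is exactly the claim.

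The argument is essentially bookkeeping of inclusions, so I do not anticipate a real obstacle; the two spots needing a little care are: (i) in the forward direction a non-reversibility witness only tells us $\hat f$ fails to be open, so one must extract from this some $A\in\filt$ with $f[A]\notin\filt$, and hence, via the displayed identity, the \emph{strict} inclusion $f\sp\ast[K_\filt]\subsetneq K_\filt$; and (ii) in the reverse direction one should note that $f\sp\ast[K_\filt]\subseteq K_\filt$ means $\beta f$ carries $\omega\cup K_\filt$ into itself and sends $K_\filt$ into $K_\filt$, so it descends along the quotient map $\omega\cup K_\filt\to\xi(\filt)$ noted in the Preliminaries to a continuous bijection of $\xi(\filt)$, namely the map $\hat f$ above. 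Neither point is more than a routine verification.
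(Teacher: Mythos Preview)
Your proof is correct and follows essentially the same approach as the paper: both identify continuous self-bijections of $\xi(\filt)$ with permutations of $\omega$, characterize continuity and openness of the extension in terms of the neighborhood filter, and translate this to $K_\filt$ via Stone duality. The only difference is organizational---you factor through the intermediate filter condition $\filt\subsetneq f[\filt]$ and the clean identity $f^\ast[K_\filt]=K_{f[\filt]}$, whereas the paper verifies the clopen-set inclusions $f^\ast[K_\filt]\subset A^\ast$ directly without naming $f[\filt]$.
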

\begin{proof}
First, assume that $g:\xi(\filt)\to \xi(\filt)$ is a continuous bijection that is not open. Then, $g[\omega]=\omega$ so let $f=g\!\!\restriction_{\omega}:\omega\to\omega$, which is a bijection.

Let $A\subset\omega$ such that $K_\filt\subset A\sp\ast$. Then $A\cup\{\filt\}$ is open, so by continuity of $g$ we obtain that $g\sp\leftarrow[A\cup\{\filt\}]=f\sp\leftarrow[A]\cup\{\filt\}$ is also open. Thus, $f\sp\leftarrow[A]\in\filt$ which implies that $K_\filt\subset f\sp\leftarrow[A]\sp\ast$. This implies that $f\sp\ast[K_\filt]\subset  A\sp\ast$. Thus, we obtain that
$$
f\sp\ast[K_\filt]\subset\bigcap\{A\sp\ast:K_\filt\subset A\sp\ast\}=K_\filt.
$$

Now, since $g$ is not open, there is $B\in\filt$ such that $f[B]\notin\filt$. Thus, $K_\filt\not\subset f[B]\sp\ast$. Since $f\sp\ast[K_\filt]\subset f[B]\sp\ast$, it follows that $K_f\not\subset f\sp\ast[K_\filt]$ so $K_f\neq f\sp\ast[K_\filt]$. We have proved that $f\sp\ast[K_\filt]\subsetneq K_\filt$.

Now, assume that $f:\omega\to\omega$ is a bijection such that $f\sp\ast[K_\filt]\subsetneq K_\filt$. Let $g=f\cup\{\pair{\filt,\filt}\}$, let us prove that this function is continuous but not open. 

We first prove that $g$ is continuous. Clearly, continuity follows directly for points of $\omega$ so let us consider neighborhoods of $\filt$ only. Any neighborhood of $\filt$ is of the form $A\cup\{\filt\}$ with $A\in\filt$. Then $K_\filt\subset A\sp\ast$ and $f\sp\ast[K_\filt]\subset A\sp\ast$ too, so $K_\filt\subset f\sp\leftarrow[A]\sp\ast$. This implies that $f\sp\leftarrow[A]\in\filt$. We obtain that $g\sp\leftarrow[A\cup\{\filt\}]=f\sp\leftarrow[A]\cup\{\filt\}$ is a neighborhood of $\filt$.

Now, let us prove that $g$ is not open. Since $f\sp\ast[K_\filt]\subsetneq K_\filt$, there exists $B\subset \omega$ such that $f\sp\ast[K_\filt]\subset B\sp\ast$ and $K_\filt\not\subset B\sp\ast$. But $K_\filt\subset f\sp\leftarrow[B]\sp\ast$. Then $f\sp\leftarrow[B]\cup\{\filt\}$ is a neighborhood of $\filt$ with image $g[f\sp\leftarrow[B]\cup\{\filt\}]=B\cup\{\filt\}$ that is not open.
\end{proof}

So from now on we will always use Lemma \ref{revremainder} when we want to check whether a filter is reversible.

According to \cite[Section 6]{revers_first}, a space is hereditarily reversible if each one of its subspaces is reversible. Given a filter $\filt$ on $\omega$, every subspace of $\xi(\filt)$ is either discrete or of the form $\xi(\filt\!\!\restriction_A)$ for some $A\in [\omega]\sp\omega$. Here $\filt\!\!\restriction_A=\{A\cap B:B\in\filt\}$. So call a filter $\filt$ hereditarily reversible if $\filt\!\!\restriction_A$ is reversible for all $A\in [\omega]\sp\omega$.

We present some characterizations of properties of $\filt$ and their equivalences for $K_\filt$. The proof of these properties is easy and left to the reader. 

\begin{lemma}\label{topfilters}
Let $\filt$ be a filter on $\omega$.
\begin{itemize}
\item[(a)] $\xi(\filt)$ is a convergent sequence if and only if $\filt=\fr$ if and only if $K_\filt=\omega\sp\ast$.
\item[(b)] $\xi(\filt)$ contains a convergent sequence if and only if $\inte[\omega\sp\ast]{K_\filt}\neq\emptyset$.
\item[(c)] $\xi(\filt)$ is Fr\'echet-Urysohn if and only if $K_\filt$ is a regular closed subset of $\omega\sp\ast$.
\item[(d)] $\filt$ is an ultrafilter if and only if $|K_\filt|=1$.
\item[(e)] Let $A\subset\omega$. Then $K_{\filt\!\restriction_A}= K_\filt\cap A\sp\ast$.
\end{itemize}
\end{lemma}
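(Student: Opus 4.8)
The five items are essentially independent, and my plan is to dispatch all of them using the dictionary recorded just before the statement, namely $A\in\filt$ iff $K_\filt\subseteq A^\ast$ and $K_\filt=\bigcap\{A^\ast:A\in\filt\}$, together with the standard facts that $\{A^\ast:A\in[\omega]^\omega\}$ is a clopen base of $\omega^\ast$, that $(A\cap B)^\ast=A^\ast\cap B^\ast$, and that $A^\ast=\emptyset$ (respectively $A^\ast=\omega^\ast$) exactly when $A$ is finite (respectively cofinite). I would prove (e) first, since (b) and (c) rely on it: for $p\in\omega^\ast$ the statement $p\in K_{\filt\restriction_A}$ unwinds to ``$A\cap B\in p$ for every $B\in\filt$''; since $\omega\in\filt$ this forces $A\in p$, and then $A\cap B\in p$ is equivalent to $B\in p$, so the whole condition is equivalent to ``$A\in p$ and $\filt\subseteq p$'', that is, to $p\in A^\ast\cap K_\filt$. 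Item (d) is the classical remark that $K_\filt$ is precisely the set of ultrafilters refining $\filt$: if $K_\filt=\{p\}$ then every subset of $\omega$ is settled by its membership in $p$, so $\filt=p$ is an ultrafilter; the converse is immediate.

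For (a): if $\filt=\fr$ then every $A\in\filt$ is cofinite, so $A^\ast=\omega^\ast$ and hence $K_\filt=\bigcap\{A^\ast:A\in\fr\}=\omega^\ast$; conversely, $K_\filt=\omega^\ast$ gives $A\in\filt\iff\omega^\ast\subseteq A^\ast\iff A$ cofinite, so $\filt=\fr$. That $\xi(\fr)$ is a convergent sequence is immediate from the definition of its topology, and conversely, in any convergent sequence the neighbourhood filter of the limit point, transported to $\omega$ along an enumeration of the sequence, is exactly $\fr$. For (b) I would use that every subspace of $\xi(\filt)$ is discrete or of the form $\xi(\filt\restriction_A)$; hence $\xi(\filt)$ contains a nontrivial convergent sequence iff $\xi(\filt\restriction_A)$ is a convergent sequence for some infinite $A$, iff (by (a) applied to $A$ in place of $\omega$ and by (e)) $A^\ast=K_{\filt\restriction_A}=K_\filt\cap A^\ast$, i.e.\ $A^\ast\subseteq K_\filt$; and this last is exactly the statement that $K_\filt$ contains a nonempty basic clopen set, i.e.\ $\inte[\omega^\ast]{K_\filt}\neq\emptyset$.

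Item (c) is the one carrying genuine content, and I expect the small amount of real work to be here. I would set up two translations. First, $\filt$ lies in the $\xi(\filt)$-closure of $A\subseteq\omega$ iff $A$ meets every member of $\filt$ iff $\emptyset\notin\filt\restriction_A$ iff $K_{\filt\restriction_A}\neq\emptyset$, i.e.\ (by (e)) iff $K_\filt\cap A^\ast\neq\emptyset$. Second, a sequence drawn from $A$ converges to $\filt$ iff there is an infinite $B\subseteq A$ with $\filt\restriction_B=\fr\restriction_B$, i.e.\ (by (a) and (e)) with $B^\ast\subseteq K_\filt$. Therefore $\xi(\filt)$ is Fr\'echet--Urysohn iff for every $A$ with $K_\filt\cap A^\ast\neq\emptyset$ there is an infinite $B\subseteq A$ with $B^\ast\subseteq K_\filt$. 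On the other hand, because the sets $B^\ast$ form a clopen base, $\inte[\omega^\ast]{K_\filt}=\bigcup\{B^\ast:B\in[\omega]^\omega,\ B^\ast\subseteq K_\filt\}$, so $K_\filt=\cl[\omega^\ast]{\inte[\omega^\ast]{K_\filt}}$ iff every nonempty $K_\filt\cap A^\ast$ meets some clopen $B^\ast\subseteq K_\filt$; replacing such a $B$ by $B\cap A$, which stays inside $K_\filt$, this says precisely that there is an infinite $B\subseteq A$ with $B^\ast\subseteq K_\filt$. The two conditions coincide, and that is (c).

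The only point requiring care is lining up the closure/convergence translations in (c) and, when matching them against ``regular closed'', remembering to shrink the witnessing clopen set into $A^\ast$; beyond that the lemma is pure bookkeeping, which is presumably why it is left to the reader.
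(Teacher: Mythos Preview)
Your argument is correct in all five parts; the translations you set up for (c) are exactly the right ones, and the small check that a witnessing clopen $B^\ast\subseteq K_\filt$ meeting $A^\ast$ can be shrunk to $(A\cap B)^\ast\subseteq K_\filt$ with $A\cap B$ infinite is handled properly. The paper itself offers no proof of this lemma (``easy and left to the reader''), so there is nothing to compare against---your write-up simply fills in what the authors omitted.
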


\section{First results}

From Lemma \ref{topfilters}, we can easily find all reversible filters that have convergent sequences. Notice that Proposition \ref{sequencenot} follows from \cite[Theorem 2.1]{hered-revers}. However, we include a proof to ilustrate a first use of Lemma \ref{revremainder}.

\begin{propo}\label{sequencenot}
Let $\filt$ be a filter on $\omega$ such that $\xi(\filt)$ has a convergent sequence. Then the following are equivalent
\begin{itemize}
 \item[(a)] $\filt$ is the Fr\'echet filter,
 \item[(b)] $\filt$ is hereditarily reversible, and
 \item[(c)] $\filt$ is reversible.
\end{itemize}
\end{propo}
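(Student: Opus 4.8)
The plan is to establish the cycle (a)$\Rightarrow$(b)$\Rightarrow$(c)$\Rightarrow$(a). The first two implications should be routine. For (a)$\Rightarrow$(b), note that when $\filt=\fr$, for every infinite $A\subseteq\omega$ the restriction $\fr\!\!\restriction_A$ is precisely the Fr\'echet filter on $A$, so $\xi(\fr\!\!\restriction_A)$ is a convergent sequence, hence compact, hence reversible; thus $\fr$ is hereditarily reversible. And (b)$\Rightarrow$(c) is immediate upon taking $A=\omega$, since $\filt\!\!\restriction_\omega=\filt$. So the real work is the contrapositive of (c)$\Rightarrow$(a): assuming $\xi(\filt)$ has a convergent sequence but $\filt\neq\fr$, I must exhibit a bijection $f:\omega\to\omega$ with $f\sp\ast[K_\filt]\subsetneq K_\filt$, which by Lemma \ref{revremainder} shows $\filt$ is not reversible.

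To set up the construction I would use Lemma \ref{topfilters}(b) to get an infinite $B\subseteq\omega$ with $B\sp\ast\subseteq K_\filt$, and Lemma \ref{topfilters}(a) (together with $\filt\neq\fr$, hence $K_\filt\neq\omega\sp\ast$) to get $A\in\filt$ with $\omega\setminus A$ infinite. Since $B\sp\ast\subseteq K_\filt\subseteq A\sp\ast$, the set $B\setminus A$ is finite, so after deleting a finite set I may assume $B\subseteq A$. Put $E=\omega\setminus A$ (infinite, disjoint from $B$, and with $E\sp\ast\cap K_\filt=\emptyset$ since $K_\filt\subseteq A\sp\ast$) and $N=A\setminus B$, and fix a partition $B=\bigsqcup_{n\in\omega}C_n$ into infinitely many infinite pieces. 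Then I would define $f$ to be the identity on $N$, a bijection of $C_n$ onto $C_{n+1}$ for each $n$, and a bijection of $E$ onto $E\cup C_0$; a short check confirms that these combine to a bijection of $\omega$.

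For the verification: since $A=B\sqcup N$ and $K_\filt\subseteq A\sp\ast$, we have $K_\filt=B\sp\ast\cup(K_\filt\cap N\sp\ast)$, and $C_0\sp\ast\subseteq B\sp\ast$ is disjoint from $N\sp\ast$. Because $f$ is the identity on $N$, its extension $f\sp\ast$ is the identity on $N\sp\ast$, so it fixes $K_\filt\cap N\sp\ast$ pointwise; meanwhile $f\sp\ast[B\sp\ast]=f[B]\sp\ast=(B\setminus C_0)\sp\ast=B\sp\ast\setminus C_0\sp\ast$. Combining these, $f\sp\ast[K_\filt]=K_\filt\setminus C_0\sp\ast$, which is a proper subset of $K_\filt$ because $C_0\sp\ast$ is a nonempty subset of $K_\filt$. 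This is exactly the condition in Lemma \ref{revremainder}, so $\filt$ is not reversible.

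The delicate point -- and where I expect the main obstacle to lie -- is designing $f$ so as to shrink the clopen piece $B\sp\ast$ of $K_\filt$ while leaving the (possibly complicated) remainder $K_\filt\cap N\sp\ast$ untouched. Since $B$ need not itself belong to $\filt$, one cannot simply permute $B$ and be done; the idea is to exploit the ``spare room'' $E$ outside $A$, whose trace on $\omega\sp\ast$ misses $K_\filt$ entirely, as the source that absorbs the forward shift $C_n\mapsto C_{n+1}$, so that the vacated slot $C_0$ is refilled from $E$ while $f$ stays fixed on $N$. The remaining details are a bookkeeping check that $\beta f$ acts as claimed on the relevant clopen sets.
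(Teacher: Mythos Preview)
Your argument is correct and follows essentially the same strategy as the paper: both proofs identify an infinite set whose trace lies inside $K_\filt$ (from the convergent sequence) and an infinite set whose trace misses $K_\filt$ (from $\filt\neq\fr$), then build a bijection that shrinks the first piece while using the second as spare room and fixing everything else. The only cosmetic differences are that your variable names $A,B$ are swapped relative to the paper's, and you use an $\omega$-shift $C_n\mapsto C_{n+1}$ where the paper gets by with a two-piece split of each set; your extra care in arranging $B\subseteq A$ is a point the paper leaves implicit.
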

\begin{proof}
From Lemma \ref{topfilters} we immediately get that (a) implies (b). That (b) implies (c) is clear so let us prove that (c) implies (a). So assume that $\filt\neq\fr$ has a convergent sequence. By Lemma \ref{topfilters} there is $A\subset\omega$ such that $A\sp\ast\subset K_\filt$. And since $\filt\neq\fr$, there is $B\in[\omega]\sp\omega$ with $\omega\setminus B\in\filt$. Thus, $K_f\cap{B}\sp\ast=\emptyset$. 

Let $A=A_0\cup A_1$ and $B=B_0\cup B_1$ be partitions into two infinite subsets. Now, let $f:\omega\to\omega$ be a bijection such that $f$ is the identity restricted to $\omega\setminus(A\cup B)$, $f[B_1]=B$, $f[B_0]=A_0$ and $f[A]=A_1$. Then it easily follows that $f\sp\ast[K_\filt]=K_\filt\setminus{A_0}\sp\ast\subsetneq K_\filt$, which shows that $\filt$ is not reversible by Lemma \ref{revremainder}.
\end{proof}

Clearly, every ultrafilter is hereditarily reversible by Lemmas \ref{revremainder} and \ref{topfilters} (this is known from \cite[Example 9]{revers_first}). By considering ultrafilters with different Rudin-Keisler types, we may find many other examples with isolated points. So naturally the question is whether there exists a reversible filter $\filt$ that is different from these examples. More precisely, we consider the following formulation of the problem.\vskip10pt

\begin{quote}
Let $X$ be a space that can be embedded in $\omega\sp\ast$ and consider a filter $\filt$ such that $K_f$ is homeomorphic to $X$. Is it possible to choose $\filt$ in such a way that $\filt$ is reversible? or not reversible?
\end{quote}\vskip10pt

For $X=\omega\sp\ast$, both questions have a positive answer. If $\filt=\fr$, then $K_\filt$ is homeomorphic to $\omega\sp\ast$ and $\filt$ is reversible. Now, say $\omega=A\cup B$ is a partition into infinite subsets and $\filt=\{C\subset A:|A\setminus C|=\omega\}$; then $K_\filt$ is homeomorphic to $\omega\sp\ast$ and $\filt$ is not reversible (Proposition \ref{sequencenot}). In the next result, we shall show that there are many reversible filters that are non-trivial and in fact, any closed subset of $\omega\sp\ast$ can be realized by one of them.

\begin{thm}\label{omegaastrev}
There exists a filter $\filt_0$ on $\omega$ with the following properties
\begin{itemize}
\item[(a)] any filter that extends $\filt_0$ is reversible,
\item[(b)] $K_{\filt_0}$ is crowded and nowhere dense, and
\item[(c)] if $X$ is any closed subset of $\omega\sp\ast$, there exists a filter $\filt\supset\filt_0$ such that $K_\filt$ is homeomorphic to $X$.
\end{itemize}
\end{thm}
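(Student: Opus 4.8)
I plan to take $\filt_0$ to be the \emph{eventual intersection} of a carefully chosen sequence of free ultrafilters, and to exploit Lemma~\ref{revremainder} together with the correspondence $\filt\mapsto K_\filt$ recalled above: filters extending $\filt_0$ are exactly those whose $K_\filt$ is a nonempty closed subset of $K_{\filt_0}$, and $\filt$ is reversible iff no bijection $f$ of $\omega$ satisfies $f\sp\ast[K_\filt]\subsetneq K_\filt$. Thus I want $K_{\filt_0}$ to be a crowded, nowhere dense copy of $\omega\sp\ast$ (which already yields (c)) such that no bijection of $\omega$ maps a nonempty closed subset of it properly into itself. Concretely: fix pairwise disjoint infinite sets $U_k\subseteq\omega$ and free ultrafilters $u_k$ with $U_k\in u_k$ ($k\in\omega$), and put $\filt_0=\{A\subseteq\omega:A\in u_k\text{ for all but finitely many }k\}$. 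Then $\{u_k:k\in\omega\}$ is relatively discrete, hence (being countable) $C\sp\ast$-embedded, so $Y:=\cl[\beta\omega]{\{u_k:k\in\omega\}}$ is a copy of $\beta\omega$ and $v\mapsto\sum_v u_k$ is a homeomorphism of $\omega\sp\ast$ onto $K_{\filt_0}=Y\setminus\{u_k:k\in\omega\}$. Besides relative discreteness, the construction must guarantee that (i) the $u_k$ lie in pairwise distinct Rudin--Keisler classes; (ii) each $u_k$ is a weak $P$-point of $\omega\sp\ast$; and (iii) $Y$ is a weak $P$-set of $\omega\sp\ast$.

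Granting (i)--(iii), items (b) and (c) are routine. For (b): $K_{\filt_0}\cong\omega\sp\ast$ is crowded, and it is nowhere dense, for if $B$ is infinite with $B\sp\ast\subseteq K_{\filt_0}$ then $B\cap U_{k_0}$ must be infinite for some $k_0$ (otherwise $B\sp\ast$ would be a nonempty clopen set disjoint from $\{u_k:k\in\omega\}$, hence from $Y\supseteq B\sp\ast$), so $\emptyset\neq(B\cap U_{k_0})\sp\ast\subseteq K_{\filt_0}\cap U_{k_0}\sp\ast$; but $u_{k_0}$ is the only point of $Y$ lying in the clopen set $U_{k_0}\sp\ast$, so $K_{\filt_0}\cap U_{k_0}\sp\ast=\emptyset$, a contradiction. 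For (c): given a closed $X\subseteq\omega\sp\ast$, carry it over by a homeomorphism $\omega\sp\ast\to K_{\filt_0}$ to a closed copy $X'\subseteq K_{\filt_0}$ and put $\filt=\{A\subseteq\omega:X'\subseteq A\sp\ast\}$; then $\filt\supseteq\filt_0$ and $K_\filt=X'\cong X$.

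The substance is (a). Suppose, for contradiction, that $f\sp\ast[L]\subsetneq L$ for some bijection $f$ of $\omega$ and some nonempty closed $L\subseteq K_{\filt_0}$, and choose $p\in L\setminus f\sp\ast[L]$. Write $p=\sum_v u_k$ for the unique $v\in\omega\sp\ast$. Then $f\sp\ast(p)\in L\subseteq K_{\filt_0}\subseteq Y$, while continuity of $\beta f$ gives $f\sp\ast(p)=\sum_v w_k$, where $w_k:=f\sp\ast(u_k)$; each $w_k$, being a homeomorphic image of the weak $P$-point $u_k$, is a weak $P$-point of $\omega\sp\ast$. Split $\omega$ into $S_1=\{k:w_k\notin Y\}$, $S_2=\{k:w_k\in Y\setminus\{u_j:j\in\omega\}\}$, $S_3=\{k:w_k\in\{u_j:j\in\omega\}\}$; exactly one of these is in $v$. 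If $S_1\in v$, then $f\sp\ast(p)=\sum_v w_k$ lies in the closure of the countable set $\{w_k:k\in S_1\}\subseteq\omega\sp\ast\setminus Y$, which misses $Y$ by (iii) -- impossible since $f\sp\ast(p)\in Y$. If $S_2\in v$, take $k\in S_2$: then $w_k$ is in the closure of the countable set $\{u_j:j\in\omega\}$, which omits $w_k$, contradicting that $w_k$ is a weak $P$-point. So $S_3\in v$; writing $w_k=u_{\sigma(k)}$ for $k\in S_3$, from $u_{\sigma(k)}=\beta f(u_k)$ and the bijectivity of $f$ we get $u_{\sigma(k)}\equiv_{RK}u_k$, whence $\sigma(k)=k$ by (i). Therefore $w_k=u_k$ for $v$-many $k$, so $f\sp\ast(p)=\sum_v u_k=p$, placing $p$ in $f\sp\ast[L]$ and contradicting the choice of $p$. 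By Lemma~\ref{revremainder}, every filter extending $\filt_0$ is reversible.

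The genuine obstacle is thus the construction of a sequence $(u_k)_{k\in\omega}$ satisfying (i), (ii) and (iii) at once. I would produce it by a recursion of length $\c$ patterned on Kunen's construction of weak $P$-points of $\omega\sp\ast$: one builds nested clopen approximations to the $u_k$ and to $Y$, at each step using the separation principle of Theorem~\ref{gaps} -- which separates a family of fewer than $\b$ clopen sets from a countable one -- to continue while respecting all previous commitments, and one simultaneously steers the $u_k$ into distinct Rudin--Keisler classes, which is cheap because each Rudin--Keisler class has cardinality at most $\c$, so only $\c$-many coincidences need to be avoided at any stage. Getting this recursion right is where essentially all the difficulty lies.
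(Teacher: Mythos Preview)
Your construction and your argument for (a) are essentially the paper's: take a discrete sequence of weak $P$-points $u_k$ with pairwise distinct Rudin--Keisler types, let $\filt_0$ be the ``eventual'' intersection, so that $K_{\filt_0}=\cl[\omega\sp\ast]{\{u_k:k\in\omega\}}\setminus\{u_k:k\in\omega\}\cong\omega\sp\ast$, and then show that for any bijection $f$ of $\omega$ with $f\sp\ast[L]\subseteq L\subseteq K_{\filt_0}$ one must have $f\sp\ast\!\!\restriction_L=\mathrm{id}$ by combining the weak $P$-point property with the RK hypothesis. Parts (b) and (c) are handled the same way.

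The one substantive difference is your condition (iii), and it is exactly where you locate the ``genuine obstacle''. In fact (iii) is not needed, and once you drop it the obstacle disappears. Your own $S_2$ argument already shows $S_2=\emptyset$, so $S_1=\{k:w_k\notin\{u_j:j\in\omega\}\}$. Now $\{u_j:j\in\omega\}\cup\{w_k:k\in S_1\}$ is a countable set of pairwise distinct weak $P$-points of $\omega\sp\ast$ (distinct because $f\sp\ast$ is injective and $w_k\notin\{u_j\}$ for $k\in S_1$), hence discrete, hence $C\sp\ast$-embedded in the $F$-space $\omega\sp\ast$; therefore $\cl[\omega\sp\ast]{\{w_k:k\in S_1\}}\cap Y=\cl[\omega\sp\ast]{\{w_k:k\in S_1\}}\cap\cl[\omega\sp\ast]{\{u_j:j\in\omega\}}=\emptyset$, which disposes of the case $S_1\in v$ without ever invoking that $Y$ is a weak $P$-set. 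This is precisely the separation step the paper uses. With (iii) gone, all you need is a countable family of weak $P$-points in pairwise distinct RK classes, and that is a known ZFC theorem of Simon which the paper simply cites; no Kunen-style recursion is required.
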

\begin{proof}
Let $\{p_n:n<\omega\}\subset\omega\sp\ast$ be a sequence of weak $P$-points with different RK types; that such a collection exists follows from \cite{simon-indeplinkfam}. Let $\omega=\bigcup\{A_n:n<\omega\}$ be a partition into infinite subsets, we may assume that $p_n\in A_n\sp\ast$ for all $n<\omega$. Define $\filt_0$ to be the filter of all subsets $B\subset\omega$ such that there is $n<\omega$ with $B\cap A_m=\emptyset$, if $m\leq n$; and $B\cap A_m\in p_m$, if $m>n$. It is easy to see that $K_f=\cl[\omega\sp\ast]{\{p_n:n<\omega\}}\setminus{\{p_n:n<\omega\}}$, notice that this implies that $K_f$ is nowhere dense. Also, since every countable subset of $\omega\sp\ast$ is $C\sp\star$-embedded, it follows that $K_f$ is homeomorphic to $\omega\sp\ast$. From this, parts (b) and (c) follow.

So we are left to prove part (a). Let $\filt\supset\filt_0$ be any filter and let $f:\omega\to\omega$ be a bijection such that $f\sp\ast[K_\filt]\subset K_\filt$, according to Lemma \ref{revremainder} we have to prove that $f\sp\ast[K_\filt]= K_\filt$. Consider the set 
$$
B=\{n<\omega:f\sp\ast(p_n)\in\{p_k:k<\omega\}\}.
$$

Notice that $\{p_n:n<\omega\}$ and $\{f\sp\ast(p_n):n\in \omega\setminus B\}$ are disjoint sets of weak $P$-points of $\omega\sp\ast$. Thus, $\{p_n:n<\omega\}\cup\{f\sp\ast(p_n):n\in \omega\setminus B\}$ is a discrete set. But countable sets in an $F$-space such as $\omega\sp\ast$ are $C\sp\ast$-embedded so  $\cl[\omega\sp\ast]{\{p_n:n<\omega\}}\cap\cl[\omega\sp\ast]{\{f\sp\ast(p_n):n\in \omega\setminus B\}}=\emptyset$. Since $f\sp\ast[K_\filt]\subset K_\filt$, we obtain that $f\sp\ast[K_\filt]\cap\cl[\omega\sp\ast]{\{f\sp\ast(p_n):n\in \omega\setminus B\}}=\emptyset$. Thus, $K_\filt\subset\cl[\omega\sp\ast]{\{p_n:n\in B\}}$.  

From the fact that the ultrafilters chosen have different RK types, we obtain that $f\sp\ast(p_n)=p_n$ for all $n\in B$. From this it follows that in fact, $f$ restricted to $\cl[\omega\sp\ast]{\{p_n:n\in B\}}$ is the identity function. Thus, $f\sp\ast[K_\filt]= K_\filt$.
\end{proof}

Next we will produce a non-reversible filter $\filt$ with $K_\filt$ homeomorphic to any closed subset of $X$ that is ED. First, we will need two lemmas. Notice that an infinite, compact, $0$-dimensional and ED space $X$ has weight $\geq\c$. To see this, consider any pairwise disjoint family $\{U_n:n<\omega\}$ of pairwise disjoint clopen sets and for every $A\subset\omega$, let $V_A=\cl{\{U_n:n\in A\}}$, which is clopen. Then $\{V_A:A\subset\omega\}$ is a family of $\c$ different clopen subsets of $X$.

\begin{lemma}\label{directlimit}
Let $\{X_n:n<\omega\}$ be infinite, compact, $0$-dimensional, ED spaces of weight $\c$. Then there exists a $0$-dimensional, ED space $Y$ such that $Y=\bigcup\{Y_n:n<\omega\}$, where
\begin{itemize}
\item[(a)] $Y_n\subset Y_{n+1}$ whenever $n<\omega$, and
\item[(b)] $Y_n$ is homeomorphic to $X_n$ for each $n<\omega$.
\end{itemize}
Moreover, $Y$ is normal and has exactly $\c$ clopen sets.
\end{lemma}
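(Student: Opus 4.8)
The plan is to realise $Y$ as a \emph{dense} subspace of a compact, $0$-dimensional, extremally disconnected space $Z$ of weight $\c$, with the $Y_n$ appearing as closed subsets of $Z$ whose union is dense. This makes the ``soft'' half of the statement automatic: a dense subspace of an extremally disconnected (resp.\ $0$-dimensional) space is again extremally disconnected (resp.\ $0$-dimensional); $Y=\bigcup_n Y_n$ is then $\sigma$-compact, hence Lindel\"of, hence --- being regular --- normal; and the clopen subsets of a dense subspace of a compact $0$-dimensional space are exactly the traces on it of the clopen subsets of the ambient space (here one uses extremal disconnectedness of $Z$), while a compact $0$-dimensional space of weight $\c$ has exactly $\c$ clopen subsets. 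So the whole problem is to build $Z$ together with the chain $(Y_n)_{n<\omega}$.

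Passing to Stone duals, put $A_n=\mathrm{Clopen}(X_n)$; since $X_n$ is compact, $0$-dimensional and extremally disconnected of weight $\c$, each $A_n$ is a complete Boolean algebra of size $\c$. It suffices to produce a complete Boolean algebra $B$ of size $\c$ with surjective homomorphisms $p_n\colon B\to A_n$ such that $\ker p_0\supseteq\ker p_1\supseteq\cdots$ and $\bigcap_n\ker p_n=\{0\}$: then $Z:=\mathrm{Stone}(B)$ is compact, $0$-dimensional and extremally disconnected of weight $\c$, dualising $p_n$ gives a closed copy $Y_n:=\mathrm{Stone}(A_n)\subset Z$ of $X_n$, the inclusion $\ker p_{n+1}\subseteq\ker p_n$ gives $Y_n\subseteq Y_{n+1}$, and $\bigcap_n\ker p_n=\{0\}$ says exactly that $\bigcup_n Y_n$ is dense in $Z$. (The nested kernels force bonding quotients $A_{n+1}\to A_n$, dual to closed embeddings $X_n\hookrightarrow X_{n+1}$; such embeddings exist because $X_{n+1}$ contains a countable discrete subspace which, being a countable subset of an $F$-space, is $C^\ast$-embedded, so its closure is a copy of $\beta\omega$, while $X_n$ --- compact, $0$-dimensional, extremally disconnected, of weight $\le\c$ --- embeds as a closed subspace of $\omega^\ast$, hence of $\beta\omega$, hence of $X_{n+1}$.)

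Building such a $B$ is the crux and, I expect, the main obstacle. The natural first candidate is the inverse limit $A_\infty=\varprojlim_n(A_n,q_n)$ along a fixed sequence of closed embeddings $X_n\hookrightarrow X_{n+1}$: it has size $\c$ and comes with surjections onto each $A_n$ having nested kernels of intersection $\{0\}$, so only completeness is lacking --- and completeness of $A_\infty$ cannot be arranged in general, because a surjective \emph{complete} homomorphism $A_{n+1}\to A_n$ has principal kernel and would force $X_n$ to be a \emph{clopen} subspace of $X_{n+1}$ (which fails, e.g., for $A_{n+1}$ the Cohen algebra $\mathrm{RO}(2^\omega)$, whose nonzero principal factors are all atomless). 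One must therefore enlarge $A_\infty$ to a complete algebra $B$ while simultaneously (i) keeping $|B|=\c$ and (ii) extending the $p_n$ to a coherent family of surjections $B\to A_n$ with the kernels still nested and meeting in $\{0\}$. The room to do this lies in the choice of the embeddings $X_n\hookrightarrow X_{n+1}$ --- for instance taking each image to be nowhere dense, or a weak $P$-set, inside a clopen copy of $\beta\omega$ --- combined with a step-by-step construction of $B$ in which the missing suprema are adjoined one at a time, each step being handled by a separation argument in the spirit of Theorem~\ref{gaps} while one verifies that the coherent system of projections onto the $A_n$ survives. Once $Z$ and the $Y_n$ are in hand, $Y:=\bigcup_n Y_n$ satisfies all the requirements by the first paragraph.
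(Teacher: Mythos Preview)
There is a genuine gap: you never actually construct the complete Boolean algebra $B$. You correctly identify this as ``the crux'' and ``the main obstacle,'' but the final paragraph is only a sketch of intentions (``the room to do this lies in the choice of the embeddings \ldots\ combined with a step-by-step construction \ldots\ each step being handled by a separation argument in the spirit of Theorem~\ref{gaps}''), not an argument. Nothing is proved there.

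Moreover, your reason for rejecting $A_\infty=\varprojlim A_n$ is based on a non sequitur. You argue that completeness of $A_\infty$ would force the bonding maps $q_n\colon A_{n+1}\to A_n$ to be complete homomorphisms, hence to have principal kernel. But completeness of an inverse limit does not require completeness of the bonding maps: the supremum in $A_\infty$ of a family $\{(a^i_n)_n:i\in I\}$ need not be the sequence of coordinatewise suprema $(\sup_i a^i_n)_n$. In fact $A_\infty$ \emph{is} complete here. To see why, note that $A_\infty$ is canonically isomorphic to $\mathrm{Clopen}(Y)$, where $Y=\varinjlim X_n$ is exactly the direct-limit space the paper builds; and the paper proves directly that this $Y$ is ED, which is equivalent to $\mathrm{Clopen}(Y)$ being complete. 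Conversely, if your $Z$ existed then, by your own trace argument, $\mathrm{Clopen}(Z)\to\mathrm{Clopen}(Y)=A_\infty$ would be a bijection, so $B=A_\infty$ necessarily and no ``enlarging'' is possible. Thus the existence of your $Z$ is \emph{equivalent} to the completeness of $A_\infty$, i.e.\ to the extremal disconnectedness of $Y$, which is precisely the step you have not supplied.

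For comparison, the paper works entirely on the topological side: it forms $Y$ as the quotient of $\bigsqcup_n(X_n\times\{n\})$ by the embeddings $e_n\colon X_n\to X_{n+1}$, verifies normality and $0$-dimensionality by a straightforward recursion separating disjoint closed sets level by level, and proves ED by a short transfinite iteration of the operator $U\mapsto\bigcup_n\cl[Y_n]{U\cap Y_n}$ until it stabilises to a clopen set equal to $\cl[Y]{U}$. Your ``soft'' reductions in the first paragraph are correct once $Z$ is in hand, but they do not bypass this work; they merely relocate it.
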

\begin{proof}
Recall that in every $0$-dimensional, ED space, all countable subsets are $C\sp\ast$-embedded. Thus, every infinite, compact, $0$-dimensional, ED space has a copy of $\beta\omega$. Also, every compact, $0$-dimensional, ED space of weight at most $\c$ can be embedded in $\omega\sp\ast$. This implies that for every $n<\omega$, there exists a topological copy of $X_n$ embedded in $X_{n+1}$.

So for each $n<\omega$, let $e_n:X_n\to X_{n+1}$ an embedding. If $n\leq m<\omega$, denote by $e_n\sp{m}:X_n\to X_m$ the composition of all such appropriate embeddings. In the union $\bigcup_{n<\omega}{(X_n\times\{n\})}$, define an equivalence relation $\pair{x,n}\sim\pair{y,m}$ and $n\leq m$ if and only if $y=e_n\sp{m}(x)$. So let $Y$ be the quotient space under this relation and for each $n<\omega$, let $Y_n$ be the image of $X_n\times\{n\}$ under the corresponding quotient map. It is easy to see that each $Y_n$ is homeomorphic to $X_n$ for each $n<\omega$. Notice that a set $U$ is open in $Y$ if and only if $U\cap Y_n$ is open in $Y_n$ for all $n<\omega$.

First, let us see that $Y$ is normal and $0$-dimensional. In fact, we will argue that if $F$ and $G$ are disjoint closed subsets of $Y$, they can be separated by a clopen subset. For each $n<\omega$, let $F_n=F\cap Y_n$ and $G_n=G\cap Y_n$. Since $F_0\cap G_0=\emptyset$ and $Y_0$ is compact and $0$-dimensional, there is a clopen set $U_0\subset Y_0$ with $F_0\subset U_0$ and $G_0\cap U_0=\emptyset$. Assume that $k<\omega$ and for each $n\leq k$ we have found $U_n$ clopen in $Y_n$ such that if $n\leq k$, then $F_n\subset U_n$, $G_n\cap U_n=\emptyset$ and if $n\leq m<k$ then $U_m\cap Y_n=U_n$. Now, the two sets $F_{k+1}\cup U_k$ and $G_{k+1}\cup (Y_k\setminus U_k)$ are disjoint and closed in $Y_{k+1}$. Then choose a clopen subset $U_{k+1}$ such that $F_{k+1}\cup U_k\subset U_{k+1}$ and $[G_{k+1}\cup (Y_k\setminus U_k)]\cap U_{k+1}=\emptyset$. This concludes the recursive construction of $\{U_n:n<\omega\}$. Finally, let $U=\bigcup\{U_n:n<\omega\}$, notice that $F\subset U$ and $G\cap U=\emptyset$. Also, $U$ is clopen because $U\cap Y_n=U_n$ is clopen in $Y_n$ for each $n<\omega$.

To see that $Y$ is ED, let $U\subset Y$ be open, we have to prove that $\cl[Y]{U}$ is clopen. We will define a sequence of open sets $U_\alpha\subset\cl[Y]{U}$ for all ordinals $\alpha$. Let $U_0=U$ and if $\alpha$ is a limit ordinal, define $U_\alpha=\bigcup_{\beta<\alpha}{U_\beta}$. Now assume that $U_\alpha$ is defined and let $U_{\alpha+1}=\bigcup\{\cl[Y_n]{U_\alpha\cap Y_n}:n<\omega\}$. Since $Y_n$ is closed in $Y$ for every $n<\omega$, $U_{\alpha+1}\subset\cl[Y]{U}$. Moreover, $Y_n$ is ED so $\cl[Y_n]{U_\alpha\cap Y_n}$ is open in $Y_n$ for each $n<\omega$. Also, clearly $\cl[Y_n]{U_\alpha\cap Y_n}\subset \cl[Y_m]{U_\alpha\cap Y_m}$ whenever $n<m<\omega$. From this it follows that $U_{\alpha+1}$ is open and we have finished our recursive construction. Notice that $U_\alpha\subset U_\beta$ whenever $\alpha<\beta$. So there exists some $\gamma<|Y|\sp+$ such that $U_\gamma=U_{\gamma+1}$. 

Notice that for all $n<\omega$, $\cl[Y_n]{U_\gamma\cap Y_n}\subset U_{\gamma+1}\cap Y_n=U_\gamma\cap Y_n$ so in fact $U_\gamma\cap Y_n$ is clopen in $Y_n$. From this it follows that $U_\gamma$ is clopen. Since $U\subset U_\gamma\subset\cl[Y]{U}$, we obtain that $U_\gamma=\cl[Y]{U}$. Then $Y$ is ED.

Since every clopen set $U$ of $Y$ is a union of the clopen subsets $U\cap Y_n$, for $n<\omega$, it follows that there are at most $\c$ clopen subsets of $Y$. Also, since $Y$ is normal, $Y_0$ is $C\sp\ast$-embedded in $Y$ so $Y$ has at least $\c$ clopen sets. This completes the proof.
\end{proof}

\begin{lemma}\label{lemmaembedding}
Let $\{A_n:n<\omega\}$ be pairwise disjoint infinite subsets of $\omega$ and for each $n<\omega$, let $K_n$ be a closed subset of $A_n\sp\ast$. Then $\bigcup\{K_n:n<\omega\}$ is $C\sp\ast$-embedded in $\beta\omega$.
\end{lemma}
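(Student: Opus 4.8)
The plan is to extend an arbitrary bounded continuous function directly. Write $A=\bigcup\{A_n:n<\omega\}$. I will use two standard facts about $\beta\omega$: for every $B\subseteq\omega$ the closure $\cl[\beta\omega]{B}$ is clopen and is, via the identity on $B$, the \v Cech-Stone compactification $\beta B$ of the discrete space $B$; and $\beta\omega$, being compact Hausdorff, is normal, so the Tietze extension theorem applies to all of its closed subspaces. Observe that for each $n$ we have $K_n\subseteq A_n\sp\ast\subseteq\cl[\beta\omega]{A_n}$, that $K_n$ is closed in $\cl[\beta\omega]{A_n}$ (because $A_n\sp\ast$ is closed in $\beta\omega$ and $K_n$ is closed in $A_n\sp\ast$), and that $\bigcup\{K_n:n<\omega\}\subseteq\cl[\beta\omega]{A}$.

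Let $g\colon\bigcup\{K_n:n<\omega\}\to[0,1]$ be continuous; since for $C\sp\ast$-embedding it is enough to extend the $[0,1]$-valued continuous functions, it suffices to extend $g$ continuously to all of $\beta\omega$. First, for each $n<\omega$ I would apply Tietze in the compact Hausdorff space $\cl[\beta\omega]{A_n}$ to extend $g\restriction_{K_n}$ to a continuous map $\phi_n\colon\cl[\beta\omega]{A_n}\to[0,1]$, and then record $\phi_n$ by its trace $h_n=\phi_n\restriction_{A_n}\colon A_n\to[0,1]$ on the discrete set $A_n$. Since $\cl[\beta\omega]{A_n}=\beta A_n$ and $\phi_n$ is a continuous extension of $h_n$, uniqueness of \v Cech-Stone extensions gives $\phi_n=\beta h_n$, and hence $\beta h_n\restriction_{K_n}=g\restriction_{K_n}$.

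Next, using that the $A_n$ are pairwise disjoint, define $h\colon\omega\to[0,1]$ by $h\restriction_{A_n}=h_n$ for every $n<\omega$ and $h\equiv 0$ on $\omega\setminus A$; this is well defined, and I claim its continuous extension $\beta h\colon\beta\omega\to[0,1]$ restricts to $g$ on $\bigcup\{K_n:n<\omega\}$. Indeed, for each $n$ the map $\beta h\restriction_{\cl[\beta\omega]{A_n}}$ is a continuous extension of $h\restriction_{A_n}=h_n$, so once more by uniqueness it equals $\beta h_n$; therefore $\beta h\restriction_{K_n}=\beta h_n\restriction_{K_n}=g\restriction_{K_n}$. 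As this holds for every $n$, $\beta h$ extends $g$, and the lemma follows.

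The only delicate point — and the place where pairwise disjointness enters — is the passage from the individual extensions $\phi_n$ to a single continuous function: one cannot simply paste the $\phi_n$ together, both because their union need not be continuous and because the sets $\cl[\beta\omega]{A_n}$ do not exhaust $\cl[\beta\omega]{\bigcup\{K_n:n<\omega\}}$. Encoding each $\phi_n$ through its restriction to the discrete fibre $A_n$ and then reassembling everything at once via the single \v Cech-Stone extension $\beta h$ is what resolves this; note that this route does not even require the $F$-space property of $\omega\sp\ast$.
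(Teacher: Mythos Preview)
Your proof is correct and follows essentially the same route as the paper's: extend $g\!\restriction_{K_n}$ over $\beta A_n$ via Tietze, record the trace on $A_n$, glue these traces into a single $h:\omega\to[0,1]$ using disjointness of the $A_n$, and take $\beta h$. The paper's version is simply terser, omitting the explicit invocation of uniqueness of \v Cech--Stone extensions that you spell out.
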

\begin{proof}
Let $f:\bigcup\{K_n:n<\omega\}\to[0,1]$ be a continuous function. Given $n<\omega$, since $K_n$ is closed in $\beta A_n$, there is a function $g_n:A_n\to [0,1]$ such that $\beta g_n\!\!\restriction_{K_n}=f_{K_n}$. So if $g:\omega\to[0,1]$ is any function extending $\bigcup\{g_n:n<\omega\}$, then $\beta g:\beta\omega\to[0,1]$ is an extension of $f$.
\end{proof}

\begin{thm}\label{EDnonrev}
Let $X$ be any compact, $0$-dimensional, ED space of weight $\leq\c$. Then there is a non-reversible filter $\filt$ on $\omega$ such that $K_\filt$ is homeomorphic to $X$.
\end{thm}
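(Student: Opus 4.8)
By Lemma~\ref{revremainder} it is enough to produce a closed set $K\subseteq\omega\sp\ast$ homeomorphic to $X$ and a bijection $f\colon\omega\to\omega$ with $f\sp\ast[K]\subsetneq K$; the filter $\filt=\{A\subseteq\omega:K\subseteq A\sp\ast\}$ then has $K_\filt=K$ and, by that lemma, is not reversible. Note first that $X$ must be infinite: $f\sp\ast$ restricts to an injection of $K$ onto $f\sp\ast[K]$, which is impossible when $K$ is a finite proper subset of itself. Hence, by the remark preceding the statement, $w(X)=\c$.

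It helps to dispose of the model case $X=\beta\omega$ first. Fix a partition $\omega=B\sqcup\bigsqcup_{n<\omega}A_n$ into infinite sets and a bijection $f$ of $\omega$ carrying each $A_n$ onto $A_{n+1}$ and $B$ onto $B\cup A_0$. Choose $p_0\in A_0\sp\ast$ and set $p_{n+1}=f\sp\ast(p_n)\in A_{n+1}\sp\ast$. As the blocks $A_n\sp\ast$ are pairwise disjoint clopen sets, $\{p_n:n<\omega\}$ is discrete, and since every countable subset of the $F$-space $\omega\sp\ast$ is $C\sp\ast$-embedded, its closure $K$ is homeomorphic to $\beta\omega$. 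Because $f\sp\ast$ shifts $\{p_n:n<\omega\}$ to $\{p_n:n\geq 1\}$, we get $f\sp\ast[K]=\cl[\omega\sp\ast]{\{p_n:n\geq1\}}=K\setminus\{p_0\}\subsetneq K$. So $\filt$ is a non-reversible filter with $K_\filt\cong\beta\omega$.

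For arbitrary $X$ the plan is to repeat this with the points $p_n$ replaced by copies of $X$ linked into a ``telescope.'' First, $X$ has a proper self-embedding: being infinite it contains a countable discrete $D$, which is $C\sp\ast$-embedded because $X$ is $0$-dimensional and ED, so $\cl[X]{D}\cong\beta\omega$; and $X$ embeds in $\omega\sp\ast\subseteq\beta\omega$, hence into $\cl[X]{D}\subsetneq X$. Call such an embedding $e$. Feeding the constant sequence $X_n=X$, with every bonding map equal to $e$, into Lemma~\ref{directlimit} produces a $0$-dimensional, ED, normal space $Y=\bigcup_{n<\omega}Y_n$ with $Y_n\cong X$, $Y_n\subsetneq Y_{n+1}$, and exactly $\c$ clopen sets; reading $Y$ through the increasing tower $(Y_n)$ and the decreasing tower $e\sp{k}[X]$ inside $Y_0$ exhibits it as $X$ with its outermost ``shell'' $X\setminus e[X]$ repeated and peelable by a shift. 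One now embeds (a copy of) $Y$ into $\omega\sp\ast$, distributing its stages across the blocks $A_n\sp\ast$ and using Lemma~\ref{lemmaembedding} to control the resulting closures, in such a way that the closed set $K$ it determines is homeomorphic to $X$ and the shift bijection $f$ from the model case induces on $K$ precisely the shell-peeling self-embedding. Then $f\sp\ast[K]\subsetneq K$, and $K\cong X$, so $\filt$ is as required.

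The heart of the matter is the last sentence: one must place the telescope inside $\omega\sp\ast$ delicately enough that the closed set obtained is homeomorphic to the prescribed space $X$ — and not to some larger compactification of $Y$, which would pick up spurious points at infinity and, worse, might fail to be ED — while keeping the purely combinatorial shift of $\{A_n\}$ responsible for the self-embedding. It is here that all three standing hypotheses on $X$ are spent: $0$-dimensionality, extremal disconnectedness and weight $\leq\c$ enter through the embedding theorems quoted earlier and through Lemmas~\ref{directlimit} and~\ref{lemmaembedding}, the latter two being exactly what make it possible to assemble the copies of $X$ into a telescope that is again embeddable, with a prescribed closure, in $\omega\sp\ast$. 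The remaining verifications — continuity of $f\sp\ast\!\restriction\! K$, the $C\sp\ast$-embedding hypothesis of Lemma~\ref{lemmaembedding}, and properness of the inclusion $f\sp\ast[K]\subseteq K$ — are routine.
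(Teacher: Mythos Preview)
Your model case $X=\beta\omega$ is correct, and your overall strategy --- realize a proper self-embedding of $X$ as $f\sp\ast\!\restriction_K$ for a combinatorial shift $f$ of $\omega$ --- is the right one. But the general case has a genuine gap at exactly the point you flag as ``the heart of the matter.''

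Concretely: you feed the constant sequence $X_n=X$ into Lemma~\ref{directlimit} to obtain $Y=\bigcup_n Y_n$ with each $Y_n\cong X$, and then assert that $Y$ can be embedded in $\omega\sp\ast$ ``distributing its stages across the blocks $A_n\sp\ast$'' so that the resulting closed set $K$ is homeomorphic to $X$. But the stages $Y_n$ are \emph{nested}, not disjoint, so they cannot be placed in the pairwise disjoint clopen blocks $A_n\sp\ast$. If instead you distribute the shells $Y_{n+1}\setminus Y_n\cong X\setminus e[X]$, these are open but not closed: your $e$ factors through the nowhere dense copy of $\omega\sp\ast$ inside $\cl[X]{D}\cong\beta\omega$, so $e[X]$ is not clopen, and Lemma~\ref{lemmaembedding} needs closed sets $K_n$. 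Even granting some placement, you give no reason the closure is $X$ rather than $\beta Y$ or $\beta(\bigsqcup_n X)$; this is precisely the delicate point, and calling it ``routine'' is not a proof.

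The paper supplies the missing idea: instead of copies of $X$, take a family $\{X_n:n<\omega\}$ of \emph{pairwise disjoint clopen subsets} of $X$, and apply Lemma~\ref{directlimit} to those. After embedding the resulting $Y$ into $A_0\sp\ast$ and shifting by $f\sp\ast$, one sets $Z_n=e_n[Y_n]\cong X_n$ inside $A_n\sp\ast$. Now the $Z_n$ are disjoint closed sets, so Lemma~\ref{lemmaembedding} applies, and because the original $X_n$ are disjoint clopen pieces $C\sp\ast$-embedded in $X$, the obvious homeomorphism $\bigcup X_n\to\bigcup Z_n$ extends to a homeomorphism of $\cl[X]{\bigcup X_n}$ onto $\cl[\omega\sp\ast]{\bigcup Z_n}$; the remaining clopen piece $X\setminus\cl[X]{\bigcup X_n}$ is embedded separately as $W\subset B\sp\ast$. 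This is how one forces the closure to be exactly $X$. The nesting $Y_n\subset Y_{n+1}$ from Lemma~\ref{directlimit} is then exactly what yields $f\sp\ast[Z_n]=e_{n+1}[Y_n]\subset e_{n+1}[Y_{n+1}]=Z_{n+1}$, hence $f\sp\ast[K_\filt]\subsetneq K_\filt$.
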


\begin{proof}
Let $\{X_n:n<\omega\}$ be a family of pairwise disjoint clopen subsets of $X$. Let $B\subset\omega$ with $|B|=|\omega\setminus B|=\omega$, let $\{A_n:n\in\Z\}$ be a partition of $\omega\setminus B$ into infinite subsets and let $f:\omega\to\omega$ be a bijection such that $f\!\!\restriction_B$ is the identity function in $B$ and for all $n\in\Z$, $f[A_n]=A_{n+1}$.

By Lemma \ref{directlimit}, there is an $0$-dimensional, ED space $Y$ with exactly $\c$ clopen sets that is equal to the increasing union of spaces $\{Y_n:n<\omega\}$ such that $Y_n$ is homeomorphic to $X_n$ and $Y_n\subset Y_{n+1}$ for all $n<\omega$. Recall that $\beta Y$ is also ED (\cite[1.2.2(a)]{vanmillhandbook}). Also, in a compact and $0$-dimensional space the weight is equal to the number of clopen sets so $\beta Y$ has weight $\c$. Thus, there is an embedding $e:Y\to A_0\sp\ast$ (\cite[1.4.7]{vanmillhandbook}).

Let $e_0=e$ and if $n<\omega$, let $e_{n+1}=f\sp\ast\circ e_n:Y\to A_{n+1}\sp\ast$. For each $n<\omega$, let $Z_n=e_n[X_n]$. Define $Z=\bigcup\{Z_n:n<\omega\}$ and let $W$ be a subset of $B\sp\ast$ homeomorphic to the set $X\setminus\cl{\bigcup{\{X_n:n<\omega\}}}$. Notice that $\bigcup\{X_n:n<\omega\}$ is a $C\sp\ast$-embedded subset of $X$ because $X$ is ED and $\bigcup\{Z_n:n<\omega\}$ is $C\sp\ast$-embedded in $Z$ by Lemma \ref{lemmaembedding}. Thus, there is an embedding $h:\cl{\bigcup{\{X_n:n<\omega\}}}\to\omega\sp\ast$ such that $h[X_n]=Z_n$ for all $n<\omega$. Since $X$ is extremally disconnected, we may extend $h$ to an embedding $H:X\to\omega\sp\ast$ in such a way that $H[X\setminus\cl{\bigcup{\{X_n:n<\omega\}}}]=W$.

So let $\filt$ be the filter of all $A\subset\omega$ with $Z\cup W\subset A\sp\ast$. We will prove that $\filt$ is not reversible by showing that $f\sp\ast[Z\cup W]\subsetneq Z\cup W$. First, notice that $f\sp\ast[W]=W$ and $f\sp\ast[Z_n]=e_{n+1}[X_n]\subset Z_{n+1}$ for all $n<\omega$. Finally, $f\sp\ast[Z\cup W]\cap A_0\sp\ast=\emptyset$ so $f\sp\ast[Z\cup W]\cap Z_0=\emptyset$. This completes the proof.
\end{proof}

\section{Embedding as weak $P$-sets}

Recall that every ED space that is a continuous image of $\omega\sp\ast$ can be embedded in $\omega\sp\ast$ as a weak $P$-set ( \cite[3.5]{vanmillhandbook}, \cite{dow-BN}). So now we study a problem similar to the one in the previous section, adding the requirement that the embedded space is a weak $P$-set. More carefully stated, we want the following.\vskip10pt

\begin{quote}
Let $X$ be a space that can be embedded in $\omega\sp\ast$ as a weak $P$-set and consider a filter $\filt$ such that $K_f$ is a weak $P$-set homeomorphic to $X$. Is it possible to choose $\filt$ in such a way that $\filt$ is reversible? or not reversible?
\end{quote}\vskip10pt

First, we start finding filters that are not reversible. The construction is similar to that in Theorem \ref{EDnonrev}. However, it needs an extra hypothesis.

\begin{thm}\label{weakPnotrev}
 Let $X$ be a compact ED space that can be embedded in $\omega\sp\ast$ as a weak $P$-set. Moreover, assume that there exists a proper clopen subspace of $X$ homeomorphic to $X$. Then there is a non-reversible filter $\filt$ on $\omega$ such that $K_\filt$ is a weak $P$-set homeomorphic to $X$.
\end{thm}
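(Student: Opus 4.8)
The plan is to mimic the construction in Theorem \ref{EDnonrev}, but now arranging for the resulting set $K_\filt$ to be a weak $P$-set of $\omega\sp\ast$. The extra hypothesis — a proper clopen copy of $X$ inside $X$ — is what lets me set up the self-similar ``shift'' that made the argument in Theorem \ref{EDnonrev} work, while keeping everything inside a single embedded copy of $X$ rather than spreading it over a direct limit.

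First I would fix a clopen $X_1\subsetneq X$ with $X_1$ homeomorphic to $X$, and iterate: choose a homeomorphism $\phi:X\to X_1$ and set $X_n=\phi\sp{n}[X]$, a decreasing sequence of clopen copies of $X$. Since $X$ embeds in $\omega\sp\ast$ as a weak $P$-set, fix such an embedding and identify $X$ with its image. The map $\phi$ is a homeomorphism of $X$ onto the clopen set $X_1$; the idea is to realize $\phi$ (or rather $\phi\sp{-1}$, going the ``expanding'' direction) as the restriction to $X$ of some $f\sp\ast$ for a bijection $f:\omega\to\omega$. To do this I would partition $\omega$ appropriately: write $\omega=B\cup\bigcup_{n<\omega}C_n$ with the $C_n$ infinite and pairwise disjoint, arrange that the ``collar'' $X_n\setminus X_{n+1}$ sits inside $C_n\sp\ast$ for each $n$ (using that consecutive collars are disjoint clopen pieces of $X$, together with Lemma \ref{lemmaembedding} to $C\sp\ast$-embed the union of the collars), and also place $\bigcap_n X_n$ somewhere harmless. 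Then define $f$ on the index sets so that $f\sp\ast$ carries the piece living over $C_n$ to the piece living over $C_{n+1}$, realizing the shift $X_n\setminus X_{n+1}\mapsto X_{n+1}\setminus X_{n+2}$, and is the identity on $B$. This makes $f\sp\ast[X]$ a proper subset of $X$ (it misses the top collar $X_0\setminus X_1$, i.e. misses part of $C_0\sp\ast$), so with $\filt=\{A\subset\omega:X\subset A\sp\ast\}$ Lemma \ref{revremainder} gives non-reversibility. The key check here, exactly as in Theorem \ref{EDnonrev}, is that $f\sp\ast[X]\subsetneq X$ and $K_\filt=X$; the topological-type requirement is automatic since $K_\filt$ is literally the fixed embedded copy of $X$.

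The genuinely new work — and what I expect to be the main obstacle — is arranging that $K_\filt=X$ is a \emph{weak $P$-set} of $\omega\sp\ast$, i.e.\ that no point of $\omega\sp\ast\setminus X$ lies in the closure of a countable subset of $X$. I cannot simply invoke ``$X$ embeds as a weak $P$-set'' because I am not free to choose the embedding arbitrarily: the embedding is constrained by the requirement that the collars sit over the prescribed pieces $C_n$ and that the shift is implemented by a bijection of $\omega$. So I would build the embedding of $X$ into $\bigcup_n C_n\sp\ast\cup B\sp\ast$ by hand, using a weak-$P$-set embedding of $X$ (or of each collar) as a template and then transporting it piecewise; the point is that a countable subset $D$ of $X$ meets only countably many collars, hence (after intersecting with cofinitely many of them being empty, or spreading across infinitely many) one reduces to controlling closures of countable sets that are each confined to one $C_n\sp\ast$ — and there one can use that the chosen embedding restricted to each collar is a weak-$P$-set embedding, plus that the $C_n\sp\ast$ are clopen and pairwise disjoint, plus the $F$-space property (Theorem \ref{gaps} / $C\sp\ast$-embedding of countable sets) to see that $\cl[\omega\sp\ast]{D}$ cannot escape into $\omega\sp\ast\setminus X$. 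One has to be careful about the limit piece $\bigcap_n X_n$ and about countable sets whose traces are unbounded across the collars; handling those is the delicate part, and it is precisely where the weak-$P$-set hypothesis on $X$ (and, implicitly, $X$ being a continuous image of $\omega\sp\ast$, which is what guarantees weak-$P$-set embeddings exist) gets used. Once $K_\filt$ is shown to be a weak $P$-set, the proof is complete.
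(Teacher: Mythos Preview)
Your overall architecture matches the paper's: decompose $X$ into pairwise disjoint, pairwise homeomorphic clopen pieces (your collars $X_n\setminus X_{n+1}$ are exactly what the paper calls its $X_n$), embed them into pairwise disjoint $A_n\sp\ast$'s as weak $P$-sets in a manner compatible with a shift bijection $f$, park the clopen remainder of $X$ as a weak $P$-set in $B\sp\ast$, and read off $f\sp\ast[X]\subsetneq X$. Two points, however, need correcting.

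The serious one: you have the definition of weak $P$-set reversed. You write that $K_\filt=X$ being a weak $P$-set means ``no point of $\omega\sp\ast\setminus X$ lies in the closure of a countable subset of $X$'', but that condition is vacuous since $X$ is closed. The correct condition is that no point of $X$ lies in the closure of a countable subset of $\omega\sp\ast\setminus X$. As a result your entire verification sketch --- analysing a countable $D\subset X$ spread across the collars --- is aimed at the wrong target. The paper's argument instead takes $\{x_n:n<\omega\}\subset\omega\sp\ast\setminus X$, uses that each embedded collar is a weak $P$-set to get $\cl[\omega\sp\ast]{\{x_n\}}$ disjoint from every collar, observes that the countable family $\{X_n:n<\omega\}\cup\{\{x_n\}:n<\omega\}$ is then discrete and hence separable by pairwise disjoint clopen sets (Lemma~\ref{lemmaembedding}), and handles the remainder sitting in $B\sp\ast$ separately via its own weak-$P$-set embedding.

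Two smaller issues. First, with an $\omega$-indexed partition $\{C_n:n<\omega\}$ and $f\!\!\restriction_B=\mathrm{id}$, the rule $f[C_n]=C_{n+1}$ cannot define a bijection of $\omega$ (nothing maps onto $C_0$); the paper avoids this by indexing over $\Z$. Second, the ``limit piece'' $\bigcap_n X_n$ need not be clopen in $X$: its complement $\bigcup_n(X_n\setminus X_{n+1})$ is open but may fail to be closed. The clopen remainder one should send to $B\sp\ast$ is $X\setminus\cl[X]{\bigcup_n(X_n\setminus X_{n+1})}$, and the boundary $\cl[X]{\bigcup_n(X_n\setminus X_{n+1})}\setminus\bigcup_n(X_n\setminus X_{n+1})$ is absorbed automatically by the $C\sp\ast$-embedding extension of the collar map.
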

\begin{proof}
 From the hypothesis on $X$, it is easy to find a collection of non-empty, pairwise disjoint clopen sets $\{X_n:n<\omega\}$ of $X$ that are pairwise homeomorphic. Let $B\subset\omega$ with $|B|=|\omega\setminus B|=\omega$, let $\{A_n:n\in\Z\}$ be a partition of $\omega\setminus B$ into infinite subsets and let $f:\omega\to\omega$ be a bijection such that $f\!\!\restriction_B$ is the identity function in $B$ and for all $n\in\Z$, $f[A_n]=A_{n+1}$.
 
 It is not hard to argue that there is an embedding $e:\bigcup\{X_n:n<\omega\}\to\bigcup\{A_n\sp\ast:n<\omega\}$ in such a way that for each $n<\omega$, $e[X_n]$ is a weak $P$-set of $A_n\sp\ast$ and $f\sp\ast[e[X_n]]=e[X_{n+1}]$. Since $\bigcup\{X_n:n<\omega\}$ is $C\sp\ast$-embedded in $X$ and $X$ is ED, we may assume that $X\subset\omega\sp\ast$, $e$ is the identity function and $X\setminus\cl{\bigcup\{X_n:n<\omega\}}$ is a weak $P$-set of $B\sp\ast$. 
 
 Now let us see that with these conditions, $X$ is in fact a weak $P$-set. Let $\{x_n:n<\omega\}$ be disjoint from $X$. Then for each $n<\omega$, $X_n$ is a weak $P$-set so $\cl[\omega\sp\ast]{\{x_n:n<\omega\}}\cap X_n=\emptyset$. Thus, the family $\{X_n:n<\omega\}\cup\{\{x_n\}:n<\omega\}$ is discrete and countable so it can be separated by pairwise disjoint clopen sets. By Lemma \ref{lemmaembedding}, it easily follows that $\bigcup\{X_n:n<\omega\}$ can be separated from $\{x_n:n<\omega\}$ by a continuous function. Also, $\cl[\omega\sp\ast]{\{x_n:n<\omega\}}\cap (X\setminus\cl{\bigcup\{X_n:n<\omega\}})=\emptyset$. So in fact $\cl[\omega\sp\ast]{\{x_n:n<\omega\}}\cap X=\emptyset$, which is what we wanted to prove.
 
 Finally, let $\filt$ be the neighborhood filter of $X$ so that $K_\filt=X$. It remains to notice that $f\sp\ast[X]\subset X\setminus A_0\sp\ast\subsetneq X$. Thus, the statement of the theorem follows. 
\end{proof}

Next, we would like to show that the extra hypothesis of Theorem \ref{weakPnotrev} is really necessary. 

\begin{thm}\label{example}
There exists a compact ED space $X$ that can be embedded in $\omega\sp\ast$ as a weak $P$-set and such that every time $\filt$ is a filter with $K_\filt$ a weak $P$-set homeomorphic to $X$ then $\filt$ is reversible.
\end{thm}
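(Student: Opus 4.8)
By Lemma~\ref{revremainder}, a filter $\filt$ fails to be reversible precisely when some bijection $f:\omega\to\omega$ has $f\sp\ast[K_\filt]\subsetneq K_\filt$; but $f\sp\ast$ is a homeomorphism of $\omega\sp\ast$, so $f\sp\ast[K_\filt]$ is always a closed subspace of $K_\filt$ homeomorphic to $K_\filt$. Moreover, if $K_\filt$ is a weak $P$-set of $\omega\sp\ast$ then so is $f\sp\ast[K_\filt]$, being the image of a weak $P$-set under an autohomeomorphism, and hence $f\sp\ast[K_\filt]$ is a weak $P$-set of $K_\filt$ too: if $D\subseteq K_\filt\setminus f\sp\ast[K_\filt]$ is countable then $\cl[\omega\sp\ast]{D}$ misses $f\sp\ast[K_\filt]$, and $\cl[K_\filt]{D}\subseteq\cl[\omega\sp\ast]{D}$. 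Therefore it suffices to construct a compact ED space $X$ that embeds in $\omega\sp\ast$ as a weak $P$-set and satisfies the rigidity property
\[
(\dagger)\qquad\text{the only closed }Y\subseteq X\text{ with }Y\cong X\text{ and }Y\text{ a weak }P\text{-set of }X\text{ is }Y=X.
\]
Indeed, if $K_\filt$ is a weak $P$-set homeomorphic to such an $X$ and $f\sp\ast[K_\filt]\subseteq K_\filt$, then transporting $(\dagger)$ through the homeomorphism $K_\filt\cong X$ forces $f\sp\ast[K_\filt]=K_\filt$, so $\filt$ is reversible.

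Note that $(\dagger)$ is the correct strengthening for the intended application: every clopen subset of $X$ is a weak $P$-set of $X$ (its complement is clopen), so $(\dagger)$ forbids any proper clopen subspace of $X$ homeomorphic to $X$; hence the space we produce genuinely witnesses that the extra hypothesis of Theorem~\ref{weakPnotrev} cannot be omitted. To obtain the embeddability for free we will also arrange that $X$ is a continuous image of $\omega\sp\ast$, so that it embeds in $\omega\sp\ast$ as a weak $P$-set by \cite[3.5]{vanmillhandbook}, \cite{dow-BN}. The substantive part is building the rigid $X$. I would do this by a recursion of length $\c$: fix an enumeration of all potential threats to $(\dagger)$ — dually, of the relevant quotients of the clopen algebra of $X$ — and at each stage adjoin clopen sets that destroy one such threat, while keeping the Boolean algebra complete (so $X$ stays ED), of size at most $\c$, and countably generated over each earlier piece (so $X$ remains a continuous image of $\omega\sp\ast$). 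A concrete variant is to build $X$ directly as a weak $P$-set inside $\omega\sp\ast$, recursing over all bijections $f:\omega\to\omega$ and, via a bookkeeping of Rudin--Keisler types as in the proof of Theorem~\ref{omegaastrev}, forcing every $f$ with $f\sp\ast[X]\subseteq X$ to fix $X$ pointwise.

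The main obstacle is exactly this construction. Extremally disconnected crowded compacta are extremely flexible — every infinite clopen piece contains a copy of $\beta\omega$, and the gluing/shrinking technology of Lemma~\ref{directlimit} and Theorem~\ref{weakPnotrev} manufactures self-embeddings with great freedom — so rigidity must be engineered against a large amount of room. The delicate points are: (i) each stage must kill a self-embedding without resurrecting killed ones or creating new ones later; (ii) extremal disconnectedness (equivalently, completeness of the clopen algebra) must survive the limit stages; and (iii) the invariant guaranteeing that $X$ is still a continuous image of $\omega\sp\ast$ — or, in the direct approach, still a weak $P$-set of $\omega\sp\ast$ — must be preserved throughout. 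The one piece of slack that makes this feasible is that $(\dagger)$ is strictly weaker than asking $X$ to have \emph{no} proper closed self-copy: proper closed copies of $X$ inside $X$ are allowed, provided each meets $\cl[X]{D}$ for some countable discrete $D\subseteq X$ disjoint from it. Once such an $X$ is in hand, the reduction of the first paragraph finishes the proof.
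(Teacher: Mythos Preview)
Your reduction in the first paragraph is correct and is exactly what the paper does: if $K_\filt$ is a weak $P$-set and $f\sp\ast[K_\filt]\subsetneq K_\filt$, then $f\sp\ast[K_\filt]$ is a proper closed weak $P$-set of $K_\filt$ homeomorphic to $K_\filt$, so a space $X$ with property $(\dagger)$ does the job. The gap is that you never construct such an $X$; you only outline a length-$\c$ recursion and list the difficulties. Since producing $X$ is, as you say yourself, ``the substantive part'', the proposal is not a proof.

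The paper avoids your recursion entirely by citing an off-the-shelf space and observing one extra fact you are missing. Dow, Gubbi and Szyma\'nski \cite{dow-gubbi-szymanski} construct in ZFC a compact ED \emph{separable} space $X$ that is rigid, and the same arguments show that no proper clopen subspace of $X$ is homeomorphic to $X$. Separability (together with crowdedness) makes $X$ a continuous image of $\omega\sp\ast$, hence embeddable as a weak $P$-set. Crucially, separability also gives $(\dagger)$ for free from the much weaker clopen-rigidity: if $Y\subsetneq X$ is closed and a weak $P$-set of $X$, then the open set $X\setminus Y$ contains a countable dense set $D$ (by separability of $X$), and the weak $P$-set property forces $\cl[X]{D}\cap Y=\emptyset$; thus $\cl[X]{X\setminus Y}\cap Y=\emptyset$ and $Y$ is clopen. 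So ``no proper clopen self-copy'' plus separability already yields $(\dagger)$. Your proposed recursion would have to manufacture all of this by hand; the paper's route replaces it with a citation and a two-line argument.
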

\begin{proof}
 In \cite{dow-gubbi-szymanski} it was shown that there exists a separable, ED, compact space $X$ that is rigid in the sense that the identity function is its only autohomeomorphism. Using very similar arguments, it can be easily proved that no clopen subset of $X$ is homeomorphic to $X$. Since $X$ is separable and crowded, it is easy to see that $X$ is a continuous image of $\omega\sp\ast$. This in turn implies that $X$ can be embedded in $\omega\sp\ast$ as a weak $P$-set.
 
 Assume now that $\filt$ is any filter on $\omega$ such that $K_\filt$ is a weak $P$-set homeomorphic to $X$. Let $f:\omega\to\omega$ be a bijection such that $f\sp\ast[K_\filt]\subset K_\filt$ and assume that $U=K_\filt\setminus f\sp\ast[K_\filt]\neq\emptyset$. Then, since $X$ is separable, there is a countable set $D\subset U$ with $\cl[\omega\sp\ast]{D}=\cl[K_\filt]{U}$. Since $D\cap f\sp\ast[K_\filt]=\emptyset$ and $f\sp\ast[K_\filt]$ is a weak P-set, it follows that $\cl[\omega\sp\ast]{D}\cap f\sp\ast[K_\filt]=\emptyset$. Thus, $\cl[K_\filt]{U}\cap f\sp\ast[K_\filt]=\emptyset$ which shows that $U=\cl[K_\filt]{U}$ and $f\sp\ast[K_\filt]$ is clopen in $K_\filt$. So $f\sp\ast[K_\filt]$ is a clopen set of $K_\filt$ homeorphic to itself, which implies $f\sp\ast[K_\filt]=K_\filt$. This is a contradiction so in fact $U=\emptyset$ and $f\sp\ast[K_\filt]=K_\filt$. This shows that $\filt$ is reversible.
\end{proof}

We finally consider filters that are reversible. In order to make the corresponding spaces weak $P$-sets of $\omega\sp\ast$, we will need to use Kunen's technique of a construction of a weak $P$-point (\cite{kunen-weakP}). We shall use Dow's approach from \cite{dow-BN}. 

First, let us recall the concept of a $\c$-OK set. So let $\kappa$ be an infinite cardinal, $X$ a space and $K$ closed in $X$. Given an increasing sequence $\{C_n:n<\omega\}$ of closed subsets of $X$ disjoint from $K$, we will say that $K$ is $\kappa$-OK with respect to $\{C_n:n<\omega\}$ if there is a set $\mathfrak{U}$ of neighborhoods of $K$ such that $|\mathfrak{U}|=\kappa$ and every time $0<n<\omega$ and $\mathfrak{U}_0\in[\mathfrak{U}]\sp{n}$, $\bigcap\mathfrak{U}_0\cap C_n=\emptyset$. Then $K$ is $\kappa$-OK if it is $\kappa$-OK with respect to every countable increasing sequence of closed subsets of $X$. It easily follows that if a closed set is $\kappa$-OK for $\kappa$ uncountable, then it is a weak $P$-set (see \cite[Lemma 1.3]{kunen-weakP}).

 In \cite[Lemma 3.2]{dow-BN}, Dow proves that if $\omega\sp\ast$ maps onto $X$, then there is an continuous surjection $\varphi:\omega\sp\ast\to X\times(\c+1)\sp\c$, where $\c+1=\c\cup\{\c\}$ is taken as the one-point compactification of the discrete space $\c$. This map $\varphi$ will replace Kunen's independent matrices from \cite{kunen-weakP}. Lemma 3.4 in \cite{dow-BN} gives a method to construct $\c$-OK points in $\omega\sp\ast$ using this map $\varphi$. We will use the following modification mentioned by Dow by the end of the proof of \cite[Theorem 3.5]{dow-BN}. For any set $I\subset\c$, we denote by $\pi_{I}:X\times (\c+1)\sp\c\to X\times (\c+1)\sp{I}$ the projection. To be consistent with notation, $\pi_\emptyset$ will denote the projection of $X\times (\c+1)\sp\c\to X$. 
 
 \begin{lemma}\label{OKlemma}
  Let $\psi:\omega\sp\ast\to X\times(\c+1)\sp I$ be continuous and onto, where $I\subset\c$ is an infinite set. Assume that $K\subset\omega\sp\ast$ is a closed set with $\psi[K]=X\times(\c+1)\sp I$ and $\{C_n:n<\omega\}$ is a sequence of closed subsets of $\omega\sp\ast$ disjoint from $K$. Then there is a countable set $J\subset I$ and a closed subset $K\sp\prime\subset K$ such that
  \begin{itemize}
   \item $(\pi_{I\setminus J}\circ\psi)[K\sp\prime]=X\times(\c+1)\sp{I\setminus J}$, and
   \item $K\sp\prime$ is $\c$-OK with respect to $\{C_n:n<\omega\}$.
  \end{itemize}
 \end{lemma}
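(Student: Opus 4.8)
\emph{Proof plan.} This lemma is a relative, single-sequence version of Dow's construction of $\c$-OK points in \cite[Lemma 3.4]{dow-BN}, together with the refinement Dow indicates at the end of the proof of \cite[Theorem 3.5]{dow-BN}; the coordinates of the compact factor $(\c+1)\sp I$ will play the role of Kunen's independent matrix. The plan has three steps: normalize the sequence $\{C_n:n<\omega\}$ and separate $K$ from it; choose the countable block $J$ of coordinates the construction will consume, define $K\sp\prime$, and check that it still projects onto $X\times(\c+1)\sp{I\setminus J}$; and produce the $\c$-OK family of neighbourhoods of $K\sp\prime$ by Kunen's scheme. For the first step it suffices to prove the conclusion with $C_n$ replaced by $\bigcup_{m\le n}C_m$, so we may assume $\{C_n:n<\omega\}$ is increasing; and since $\omega\sp\ast$ is compact, Hausdorff and zero-dimensional and each $C_n$ is closed and disjoint from $K$, we fix clopen sets $D_1\supseteq D_2\supseteq\cdots$ with $K\subseteq D_n$ and $D_n\cap C_n=\emptyset$, put $D_0=\omega\sp\ast$, and write $L_k=D_k\setminus D_{k+1}$, so that $C_n\subseteq\omega\sp\ast\setminus D_n=L_0\cup\cdots\cup L_{n-1}$ for every $n\ge1$.

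For the second step, fix any countably infinite $J=\{i_n:n<\omega\}\subseteq I$ and write $\rho=\pi_J\circ\psi\colon\omega\sp\ast\to(\c+1)\sp J$ and $\sigma=\pi_{I\setminus J}\circ\psi\colon\omega\sp\ast\to X\times(\c+1)\sp{I\setminus J}$. Let $e\in(\c+1)\sp J$ be the point all of whose coordinates equal $\c$, and set $K\sp\prime=K\cap\rho\sp{-1}[\{e\}]$, a closed subset of $K$. To see that $\sigma[K\sp\prime]=X\times(\c+1)\sp{I\setminus J}$, fix $y$ in the target: from the identity $\psi[K\cap\psi\sp{-1}[S]]=\psi[K]\cap S$ (valid for any function), taken with $S=\{y\}\times(\c+1)\sp J$, and the hypothesis $\psi[K]=X\times(\c+1)\sp I$, we get $\psi[K\cap\sigma\sp{-1}[\{y\}]]=\{y\}\times(\c+1)\sp J$, so $\rho[K\cap\sigma\sp{-1}[\{y\}]]=(\c+1)\sp J\ni e$, and some point of $K\cap\sigma\sp{-1}[\{y\}]$ lies in $\rho\sp{-1}[\{e\}]$ — that is, in $K\sp\prime$ — and maps under $\sigma$ to $y$. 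This is the only genuinely new point over \cite[Lemma 3.4]{dow-BN}: localizing to the single fibre over $e$ costs only the countably many coordinates in $J$ and leaves $K\sp\prime$ onto $X\times(\c+1)\sp{I\setminus J}$, so that the output can be fed to a later application of the lemma.

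For the third step we must produce a family $\mathfrak{U}=\{U_\alpha:\alpha<\c\}$ of neighbourhoods of $K\sp\prime$ with $U_{\alpha_1}\cap\cdots\cap U_{\alpha_n}\cap C_n=\emptyset$ for all $n\ge1$ and distinct $\alpha_1,\dots,\alpha_n$; since $C_n\subseteq\omega\sp\ast\setminus D_n$ it suffices to arrange $U_{\alpha_1}\cap\cdots\cap U_{\alpha_n}\subseteq D_n$, i.e.\ that any $n$ of the $U_\alpha$ kill the layers $L_0,\dots,L_{n-1}$. The raw material is the clopen family $X_{k,\gamma}:=\rho\sp{-1}[\{g:g(i_k)=\gamma\}]$ for $k<\omega$ and $\gamma<\c$: each $X_{k,\gamma}$ is disjoint from $\rho\sp{-1}[\{e\}]\supseteq K\sp\prime$, for fixed $k$ the sets $\{X_{k,\gamma}:\gamma<\c\}$ are pairwise disjoint, and — because $\rho$ maps onto $(\c+1)\sp J$ — the partitions they form, indexed by $k$, are independent (every finite intersection $\bigcap_{k\in F}X_{k,\gamma_k}$ is nonempty). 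One then runs Kunen's construction of a $\c$-OK point relativized to $K\sp\prime$ and to the decreasing sequence $\{D_n\}$, exactly as in \cite[Lemma 3.4]{dow-BN}: by a recursion of length $\c$ one assembles each $U_\alpha$ out of the $D_n$ and the sets $X_{k,\alpha}$ so that, for distinct $\alpha_1,\dots,\alpha_n$, the intersection $U_{\alpha_1}\cap\cdots\cap U_{\alpha_n}$ is squeezed into $D_n$. I expect this step to be the main obstacle: steps one and two are routine, but here the independence of the coordinate-fibres is what makes the recursion go, and the delicate point is to verify that each assembled $U_\alpha$ really is a neighbourhood of $K\sp\prime$ and not just a superset of it — this is where the $F$-space property of $\omega\sp\ast$ (disjoint cozero sets have disjoint closures) enters — while checking that performing the whole construction inside the fibre $\rho\sp{-1}[\{e\}]$ obstructs neither the squeezing into the $D_n$ nor that neighbourhood verification.
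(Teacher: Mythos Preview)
The paper does not prove this lemma; it is quoted without argument as the variant of \cite[Lemma~3.4]{dow-BN} that Dow indicates at the end of the proof of \cite[Theorem~3.5]{dow-BN}. Your outline is precisely the Kunen--Dow scheme being invoked: normalize $\{C_n\}$ to a decreasing clopen tower $\{D_n\}$, sacrifice a countable coordinate block $J$, take $K'$ to be the fibre of $K$ over the top point $e\in(\c+1)^J$, and use the independent coordinate partitions $\{X_{k,\gamma}:\gamma<\c\}$ as the raw material for the $\c$-OK family. Your surjectivity verification in step two is correct and is exactly the refinement over Dow's point-version that the paper is alluding to when it calls the lemma a ``modification''. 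Your caveats about step three are well placed --- the assembly of the $U_\alpha$ from the layers $L_k$ and the sets $X_{k,\alpha}$, together with the check that each $U_\alpha$ is a genuine neighbourhood of $K'$ rather than merely a closed superset, is the substance of \cite[Lemma~3.4]{dow-BN}, and neither you nor the paper reproduces it --- but you have located the difficulty accurately and there is no gap in the plan beyond what you already flag.

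One minor remark: for a \emph{single} sequence $\{C_n\}$ the neighbourhoods $U_\alpha$ can be written down directly from the $D_n$ and the $X_{k,\alpha}$, without a transfinite recursion; the recursion of length $\c$ in Kunen's and Dow's arguments is what threads together the treatment of \emph{all} countable sequences to produce a fully $\c$-OK set, and in the present paper that threading is done externally, in the induction of Theorem~\ref{weakPrev}.
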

 
  Recall in $(\c+1)\sp{J}$, where $J$ any set, there is a base of clopen subsets of the form $\prod\{U_\xi:\xi\in J\}$ where each factor $U_\xi$ is clopen and the support $\{\xi\in J: U_\xi\neq \c+1\}$ is finite.
 
\begin{thm}\label{weakPrev}
 Let $X$ be a compact ED space that is a continuous image of $\omega\sp\ast$. Then there is a reversible filter $\filt$ such that $K_\filt$ is a weak $P$-set homeomorphic to $X$.
\end{thm}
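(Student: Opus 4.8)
The plan is to build $K_\filt$ by a recursion of length $\c$ so that it is homeomorphic to $X$, is $\c$-OK (hence a weak $P$-set by \cite[Lemma 1.3]{kunen-weakP}), and contains a dense subset $D$ such that each $d\in D$ is the \emph{only} point of $K_\filt$ in its Rudin--Keisler equivalence class. This last property is what forces reversibility: if $f\colon\omega\to\omega$ is a bijection with $f\sp\ast[K_\filt]\subseteq K_\filt$, then for every $d\in D$ the ultrafilter $f\sp\ast(d)$ lies in $K_\filt$ and is Rudin--Keisler equivalent to $d$ (via $f$), so $f\sp\ast(d)=d$; since $D$ is dense in $K_\filt$ and $f\sp\ast$ is continuous, $f\sp\ast$ restricts to the identity on $K_\filt$, so $f\sp\ast[K_\filt]=K_\filt$ and Lemma \ref{revremainder} applies. (If $X$ is finite the statement is immediate, since an injective self-map of a finite set is onto and finitely many weak $P$-points form a weak $P$-set; so assume $X$ is infinite and fix a dense set $\{x_i:i<\kappa\}\subseteq X$ with $\kappa\leq\c$.)

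Following the proof of \cite[Theorem 3.5]{dow-BN}, fix via \cite[Lemma 3.2]{dow-BN} a continuous surjection $\varphi\colon\omega\sp\ast\to X\times(\c+1)\sp\c$ and build a decreasing chain of closed sets $\omega\sp\ast=K_0\supseteq\cdots\supseteq K_\alpha\supseteq\cdots$ ($\alpha<\c$) together with a decreasing chain $\c=I_0\supseteq I_1\supseteq\cdots$ with $|\c\setminus I_\alpha|\leq|\alpha|+\omega$, maintaining the invariant $(\pi_{I_\alpha}\circ\varphi)[K_\alpha]=X\times(\c+1)\sp{I_\alpha}$; we arrange $I_\c=\emptyset$ and put $K_\filt=\bigcap_{\alpha<\c}K_\alpha$. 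At successor stages we meet, cofinally often, four kinds of requirements. (i) \emph{$K_\filt\cong X$}: using one fresh coordinate $\xi\in I_\alpha$ at a time we pin $\pi_{\{\xi\}}\circ\varphi\restriction K_{\alpha+1}$ to the graph of a continuous $h_\xi\colon X\to\c+1$, and at other stages thin fibres of $\varphi\restriction K_{\alpha+1}$; this is Dow's device forcing $\varphi\restriction K_\filt$ to be a homeomorphism onto a graph over $X$, so that $\pi_\emptyset\circ\varphi\restriction K_\filt$ is a homeomorphism onto $X$. (ii) \emph{Weak $P$-set}: for each increasing sequence $\{C_n:n<\omega\}$ of closed subsets of $\omega\sp\ast$ disjoint from $K_\alpha$ drawn from a fixed size-$\c$ family (complements of basic clopen sets), Lemma \ref{OKlemma} applied to $\psi=\pi_{I_\alpha}\circ\varphi$ and $K_\alpha$ yields a countable $J\subseteq I_\alpha$ and $K_{\alpha+1}\subseteq K_\alpha$ with $(\pi_{I_\alpha\setminus J}\circ\varphi)[K_{\alpha+1}]=X\times(\c+1)\sp{I_\alpha\setminus J}$ and $K_{\alpha+1}$ $\c$-OK with respect to $\{C_n\}$; standard bookkeeping then makes $K_\filt$ $\c$-OK.

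The remaining two requirement types install the anchor structure. (iii) \emph{Choosing $D$}: for each $i<\kappa$ we choose, at some stage $\alpha_i$, a point $d_i\in(\pi_\emptyset\circ\varphi)\sp\leftarrow(x_i)\cap K_{\alpha_i}$ not Rudin--Keisler equivalent to any $d_j$ with $j<i$ --- possible since that fibre has cardinality $2\sp\c$ while $\bigcup_{j<i}[d_j]_{RK}$ has cardinality at most $\c$ --- and declare $d_i$ permanently protected, i.e.\ required to lie in every later $K_\beta$; since $\pi_\emptyset\circ\varphi$ sends $d_i$ to $x_i$ and $\{x_i:i<\kappa\}$ is dense, $D=\{d_i:i<\kappa\}$ will be dense in $K_\filt$. (iv) \emph{Killing Rudin--Keisler repetitions}: for each $i<\kappa$ and each $q\in[d_i]_{RK}\setminus\{d_i\}$, at some later stage we exclude $q$, that is, pass to a $K_{\alpha+1}$ with $q\notin K_{\alpha+1}$; there are only $\c$ such tasks, and since the protected points committed so far lie in every later $K_\beta$ their closure is contained in $K_\filt$ and does not meet $q$, so such an exclusion is consistent with the protection. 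Carrying all this out produces a $\c$-OK weak $P$-set $K_\filt\cong X$ with $D$ dense in $K_\filt$ and $[d]_{RK}\cap K_\filt=\{d\}$ for every $d\in D$, whence the neighbourhood filter $\filt$ of $K_\filt$ is reversible.

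The heart of the matter --- and where the factor $(\c+1)\sp\c$ together with Lemmas 3.2 and 3.4 of \cite{dow-BN} and our Lemma \ref{OKlemma} does the work --- is to organize the bookkeeping so that all four kinds of shrinking preserve the surjectivity invariant simultaneously. The $\c$-OK steps are handled by Lemma \ref{OKlemma}, and the graph-pinning and fibre-thinning steps by Dow's argument; but the exclusions in (iv) and the protection of the $d_i$'s cannot be carried out by arbitrary clopen cuts and must instead be realized \emph{along coordinates of $(\c+1)\sp\c$}: one excludes $q$ by choosing a later pinning function $h_\xi$ with $h_\xi(\pi_\emptyset\varphi(q))$ different from the $\xi$-th coordinate of $\varphi(q)$ --- so that $\varphi(q)$ falls off the eventual graph --- or, if $q$ has the same $\varphi$-image as a protected point, by the fibre-thinning step. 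One must check that each $h_\xi$ can always be chosen continuous on $X$, compatibly with the (so far finitely or countably many) coordinate constraints imposed by the protected points, and so as to keep every $d_i$ on the eventual section. This coordinated bookkeeping is the main obstacle; once it is in place, the verification of the three properties of $K_\filt$, and hence of the reversibility of $\filt$, is as sketched above.
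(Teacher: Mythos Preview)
Your route to reversibility is genuinely different from the paper's. The paper does not protect any specific points of $K$; instead it enumerates all bijections $\{f_\alpha:\alpha\in\Lambda_2\}$ with cofinal repetition and at each such stage performs one more shrinking: assuming there exist clopen $U\subset\omega\sp\ast$ and $V\subset X$ with $(\pi_{I_\alpha}\circ\varphi)[K_\alpha\cap U]=V\times(\c+1)\sp{I_\alpha}$ and $(\pi_\emptyset\circ\varphi)[f_\alpha\sp\ast[K_\alpha\cap U]]\subset X\setminus V$, one passes (using finitely many coordinates, via a two-case analysis) to a $K_{\alpha+1}\subset K_\alpha$ for which some $x\in X$ satisfies $f_\alpha\sp\ast[K_{\alpha+1}\cap(\pi_\emptyset\circ\varphi)\sp\leftarrow(x)]\cap K_{\alpha+1}=\emptyset$. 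If in the end $f\sp\ast[K]\subsetneq K$ for some $f$, irreducibility gives a clopen $V$ missed by $(\pi_\emptyset\circ\varphi)[f\sp\ast[K]]$, one finds a late stage where the hypothesis above holds, and the unique preimage of $x$ in $K$ is sent outside $K$ by $f\sp\ast$, a contradiction. Because this is just another shrinking that preserves the surjectivity invariant, no tension with the irreducibility or $\c$-OK steps arises.

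Your Rudin--Keisler anchor idea is attractive and parallels the proof of Theorem~\ref{omegaastrev}, and if it succeeded it would yield more: every filter extending $\filt$ would be reversible. But the tension you flag in your last paragraph is real and not resolved. Once you have protected a set $\{d_i\}$ with $\{x_i\}$ dense in $X$, any later graph-pinning coordinate $\xi$ forces $h_\xi(x_i)=(\varphi(d_i))_\xi$ for every protected $i$; these values on a dense subset determine $h_\xi$ and there is no reason they extend to a continuous map $X\to\c+1$. Replacing graph-pinning by the paper's irreducibility step does not help: in the case $(\pi_{I_\alpha}\circ\varphi)[K_\alpha\cap B_\alpha]=X\times(\c+1)\sp{I_\alpha}$ the paper sets $K_{\alpha+1}=K_\alpha\cap B_\alpha$, which will typically discard protected points. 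Lemma~\ref{OKlemma} as stated likewise gives no control over whether the shrunk $K'$ contains prescribed points. As written the proposal is an outline with an acknowledged but unfilled gap; the paper's per-bijection shrinking sidesteps precisely this difficulty by never needing to keep any particular point in every $K_\alpha$.
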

\begin{proof}
 Let $\varphi:\omega\sp\ast\to X\times(\c+1)\sp\c$ be the surjection from \cite[Lemma 3.2]{dow-BN}. Our objective is to recursively construct a closed set $K\subset\omega\sp\ast$ such that $(\pi_\emptyset\circ\varphi)[K]=X$ and $(\pi_\emptyset\circ\varphi)\!\!\restriction_K:K\to X$ is irreducible. By a classic result by Gleason (see, for example, the argument in \cite[1.4.7]{vanmillhandbook}) it follows that $(\pi_\emptyset\circ\varphi)\!\!\restriction_K$ is a homeomorphism. So it only remains to take $\filt$ to be the filter of neighborhoods of $K$. 
 
 We will define $K$ as the intersection of a family $\{K_\alpha:\alpha<\c\}$ of closed subsets of $\omega\sp\ast$, ordered inversely by inclusion. We will also define a decreasing sequence $\{I_\alpha:\alpha<\c\}\subset\c$ such that $I_0=\c$ and $|\c\setminus I_\alpha|<|\alpha|\cdot\omega$ for all $\alpha<\c$. We will have the following conditions:
 
 \begin{itemize}
  \item[(a)] If $\beta<\c$ is a limit, $K_\beta=\bigcap\{K_\alpha:\alpha<\c\}$ and $I_\beta=\bigcap\{I_\alpha:\alpha<\c\}$.
  \item[(b)] For each $\alpha<\c$, $(\pi_{I_\alpha}\circ\varphi)[K_\alpha]=X\times(\c+1)\sp{I_\alpha}$.
 \end{itemize}

 We need to do acomplish three things in our construction: make $K$ a weak $P$-set, that the map $(\pi_\emptyset\circ\varphi)\!\!\restriction_K:K\to X$ is irreducible and make sure that the filter of neighborhoods $\filt$ is reversible. So we will partition ordinals into three sets. For $i\in\{0,1,2\}$, let $\Lambda_i$ be the set of ordinals $\alpha<\c$ such that $\alpha=\beta+n$, $\beta$ is a limit ordinal and $n<\omega$ is congruent to $i$ modulo $3$. Let $\{\{C_n\sp\alpha:n<\omega\}:\alpha\in\Lambda_0\}$ be an enumeration of all countable increasing of clopen sets where each sequence is repeated cofinally often. Let $\{B_\alpha:\alpha\in\Lambda_1\}$ be an enumeration of all clopen subsets of $\omega\sp\ast$. For these two types of steps we need the following conditions.
 
 \begin{itemize}
  \item[(c)] Let $\alpha\in\Lambda_0$. If $K_\alpha$ is disjoint from all the members of the sequence $\{C_n\sp\alpha:n<\omega\}$, then $|I_\alpha\setminus I_{\alpha+1}|\leq\omega$ and $K_{\alpha+1}$ is $\c$-OK with respect to $\{C_n\sp\alpha:n<\omega\}$.
  \item[(d)] Let $\alpha\in\Lambda_1$. If $(\pi_{I_\alpha}\circ\varphi)[K_\alpha\cap B_\alpha]=X\times(\c+1)\sp{I_\alpha}$, then $I_{\alpha+1}=I_\alpha$ and $K_{\alpha+1}=K_\alpha\cap B_\alpha$. Otherwise, there are clopen sets $C\subset X$ and $D\subset(\c+1)\sp{\c}$ such that the support of $D$ is equal to $I_\alpha\setminus I_{\alpha+1}$, $\varphi[K_\alpha\cap B_\alpha]\cap(C\times D)=\emptyset$ and $K_{\alpha+1}=K_\alpha\cap\varphi\sp\leftarrow[X\times D]$.
 \end{itemize}

 Clearly, (c) follows from Lemma \ref{OKlemma} and implies that $K$ is a weak $P$-set of $\omega\sp\ast$. Also, it is not hard to see that condition (d) implies that $(\pi_\emptyset\circ\varphi)\!\!\restriction_K:K\to X$ is irreducible. Conditions (c) and (d) are taken from the proof of \cite[Theorem 3.5]{dow-BN}.
 
 Finally, we need to take care of reversibility using the $\c$ chances we get from $\Lambda_2$. Let $\{f_\alpha:\alpha\in\Lambda_2\}$ be an enumeration of all bijections from $\omega$ onto itself, each one repeated cofinally often. We will require the following condition.
 
 \begin{itemize}
  \item[(e)] Let $\alpha\in\Lambda_2$. Assume that there exists a clopen sets $U\subset\omega\sp\ast$ and $V\subset X$ such that $(\pi_{I_\alpha}\circ\varphi)[K_\alpha\cap U]=V\times(\c+1)\sp{I_\alpha}$ and $(\pi_\emptyset\circ\varphi)[f_\alpha\sp\ast[K_\alpha\cap U]]\subset X\setminus V$. Then $|I_\alpha\setminus I_{\alpha+1}|<\omega$ and there is $x\in X$ such that $f_\alpha\sp\ast[K_{\alpha+1}\cap (\pi_\emptyset\circ\varphi)\sp\leftarrow(x)]\cap K_{\alpha+1}=\emptyset$.
 \end{itemize}
 
 Before we show how to prove that (e) can be obtained, let us show why it implies that the filter of neighborhoods of $K$ is reversible. Assume that after our construction, $K$ is not reversible. Then by Lemma \ref{revremainder}, there is a bijection $f:\omega\to\omega$ such that $f\sp\ast[K]\subsetneq K$. By property (d) above we know that $(\pi_\emptyset\circ\varphi)\!\!\restriction_K:K\to X$ is irreducible so $(\pi_\emptyset\circ\varphi\circ f\sp\ast)[K]$ is a proper closed subset of $X$. Let $V$ be a clopen set disjoint from $(\pi_\emptyset\circ\varphi\circ f\sp\ast)[K]$. Now consider the clopen subset $W=(\pi_\emptyset\circ\varphi)\sp\leftarrow[V]$ of $\omega\sp\ast$. From the definition of $K$ and the facts that $f\sp\ast$ is a homeomorphism and $f\sp\ast[K]\cap W=\emptyset$, there is $\beta<\c$ such that $f\sp\ast[K_\gamma]\cap W=\emptyset$ every time $\beta\leq\gamma<\c$. So fix $\alpha\in\Lambda_2$ such that $\beta\leq\alpha$ and $f_\alpha=f$. Now define
 $$
 U=W\cap ((f\sp\ast)\sp{\leftarrow}[\omega\sp\ast\setminus W]),
 $$
 which is a clopen set of $\omega\sp\ast$ with the property that $(\pi_\emptyset\circ\varphi\circ f\sp\ast)[U]\subset X\setminus V$. From the choice of $\alpha$ we obtain that $K_\alpha\cap W\subset U$. Also, $(\pi_{I_\alpha}\circ\varphi)[K_\alpha\cap W]=V\times(\c+1)\sp{I_\alpha}$ by property (b). Finally, notice that $K_\alpha\cap W=K_\alpha\cap U$. Thus, our choice of $\alpha$, $U$ and $V$ satisfy the hypothesis of condition (e). So let $x$ as in the conclusion of (e) and take $p\in K$ such that $(\pi_\emptyset\circ\varphi)(p)=x$. Then $p\in K_{\alpha+1}\cap (\pi_\emptyset\circ\varphi)\sp\leftarrow(x)$ implies that $f_\alpha\sp\ast(p)\notin K_{\alpha+1}$. Thus, $p\in K\setminus f\sp\ast[K]$, a contradiction. This contradiction comes from the fact that we assumed that $K$ was not reversible.
 
 So we are left to prove that condition (e) can be achieved. So assume we have $\alpha$, $U$ and $V$ like in the hypothesis of (e). Choose any $i\in I_\alpha$ and let $J=I_\alpha\setminus\{i\}$. For each $\xi\in\c$, let $U_\xi=(\pi_{\{i\}}\circ\varphi)\sp\leftarrow(X\times\{\xi\})$, which is a clopen set in $X$. Then $\{U_\xi:\xi\in\c\}$ is a pairwise disjoint collection of clopen subsets of $X$ such that $(\pi_{J}\circ\varphi)[K_\alpha\cap U_\xi]=X\times(\c+1)\sp{J}$ for all $\xi\in\c$ . For each $\xi\in\c$, consider the set $V_\xi=(f_\alpha\sp\ast)\sp\leftarrow[U_\xi]\cap K_\alpha\cap U$, which is a clopen set of $K_\alpha\cap U$. Here we will have two cases.\vskip10pt

 \noindent{\it Case 1:} There exists $\xi_0\in\c$ such that $(\pi_J\circ\varphi)[V_{\xi_0}]=V\times(\c+1)\sp{J}$. Choose any $\xi_1\in\c\setminus\{\xi_0\}$, let $I_{\alpha+1}=J$ and define
 $$
 K_{\alpha+1}=V_{\xi_0}\cup(K_\alpha\cap U_{\xi_1}\cap(\pi_\emptyset\circ\varphi)\sp\leftarrow[X\setminus V]).
 $$
 Notice that $K_{\alpha+1}\subset K_\alpha$ and $(\pi_{I_{\alpha+1}}\circ\varphi)[K_{\alpha+1}]=X\times(\c+1)\sp{I_{\alpha+1}}$. Now take any $x\in V$. Then $K_{\alpha+1}\cap(\pi_\emptyset\circ\varphi)\sp\leftarrow(x)\subset V_{\xi_0}$ so $f\sp\ast[K_{\alpha+1}\cap(\pi_\emptyset\circ\varphi)\sp\leftarrow(x)]\subset U_{\xi_0}$. Since $U_{\xi_0}\cap U_{\xi_1}=\emptyset$, then $f\sp\ast[K_{\alpha+1}\cap(\pi_\emptyset\circ\varphi)\sp\leftarrow(x)]\cap K_{\alpha+1}=\emptyset$ and we are done.\vskip10pt
 
 \noindent{\it Case 2:} Not Case 1. Take $\xi_0\in\c$, then there exists clopen sets $C\subset V$ and $D\subset(\c+1)\sp{J}$ such that $C\times D$ is disjoint from $(\pi_J\circ\varphi)[V_{\xi_0}]$. Let $J\sp\prime\subset J$ be the support of $D$ and define $I_{\alpha+1}=J\setminus J\sp\prime$. In this case, define
 $$
 K_{\alpha+1}=(K_\alpha\cap U\cap(\pi_{J}\circ\varphi)\sp\leftarrow[C\times D])\cup(K_\alpha\cap U_{\xi_0}\cap(\pi_{J}\circ\varphi)\sp\leftarrow[(X\setminus C)\times D]).
 $$
 Clearly, $K_{\alpha+1}\subset K_\alpha$. It is not hard to see that and $(\pi_J\circ\varphi)[K_{\alpha+1}]=X\times D$, which in turn implies that $(\pi_{I_{\alpha+1}}\circ\varphi)[K_{\alpha+1}]=X\times (\c+1)\sp{I_{\alpha+1}}$. Now let $x\in C$. Assume there is $p\in K_{\alpha+1}$ such that $(\pi_{\emptyset}\circ\varphi)(p)=x$ and $q=f_\alpha\sp\ast(p)\in K_{\alpha+1}$, we will reach a contradiction. Notice that $p\in U$, which implies that $q\in (\pi_{\emptyset}\circ\varphi)\sp\leftarrow[X\setminus V]$. So from the definition of $K_{\alpha+1}$ we obtain that $q\in U_{\xi_0}$. This in turn implies that $p\in V_{\xi_0}$. By the choice of $C\times D$ we obtain that $(\pi_J\circ\varphi)(p)\notin C\times D$. But since $x\in C$, $p\in K_{\alpha+1}\cap U\cap(\pi_{J}\circ\varphi)\sp\leftarrow[C\times D])$ so $(\pi_{J}\circ\varphi)(p)\in C\times D$. So we obtain a contradiction and we obtain the negation of our assumption. Thus, $f\sp\ast[K_{\alpha+1}\cap(\pi_\emptyset\circ\varphi)\sp\leftarrow(x)]\cap K_{\alpha+1}=\emptyset$, which is what we wanted to prove.\vskip10pt
 
 These two cases complete the proof of condition (e) and finish the proof of the Theorem.
\end{proof}

\section{Filters generated by towers}\label{MAsection}

It is well known that Martin's axiom (henceforth, $\MA$) implies that there are filters that are $P$-filters (see, for example, \cite[Theorem 4.4.5]{bart}). Equivalently, there is a filter $\filt$ such that $K_\filt$ is a $P$-set. It is not hard to see that by changing all instances of ``weak $P$-set'' to just ``$P$-set'' in Theorem \ref{weakPnotrev}, we obtain a valid statement. Also, every $P$-set in a weak $P$-set so the $P$-set version of Theorem \ref{example} is in fact implied by Theorem \ref{example}. 

As for the $P$-set version of Theorem \ref{weakPrev}, we will do something stronger, but only for separable, first countable spaces. Recall that a tower is a set $\{A_\alpha:\alpha<\kappa\}\subset\mathcal{P}(\omega)$, for some $\kappa$, such that
\begin{itemize}
 \item $A_\beta\setminus A_\alpha$ is finite every time $\alpha<\beta<\kappa$, and 
 \item there is no $A\subset\omega$ such that $A\setminus A_\alpha$ is finite for every $\alpha<\kappa$.
\end{itemize}
In this case, $\{A_\alpha\sp\ast:\alpha<\kappa\}$ is a decreasing chain of clopen subsets of $\omega\sp\ast$ with nowhere dense intersection. Clearly, every tower generates a filter and every filter generated by a tower of height $\kappa=\c$ is a $P$-filter. In fact, every filter generated by a tower of height $\c$ is a $P_{\c}$-filter. 

In what follows we will assume the reader's familiarity with $\MA$ and small uncountable cardinals from \cite{vdhandbook}. One fact that we will use several times is that $\MA$ implies that every intersection of less than $\c$ many clopen sets is a regular closed set (this follows from Theorem \ref{gaps}).

\begin{lemma}\label{lifting}
 Let $X_0$, $X_1$ be compact separable spaces of weight $<\c$ and let $\psi_0:\omega\sp\ast\to X_0$ be a continuous onto function. Assume that there is a continuous function $\pi:X_1\to X_0$ and a partition $X_1=V_0\cup V_1$ into two clopen sets such that $\pi\!\!\restriction_{V_i}:V_i\to X_0$ is an embedding for $i<2$. Then $\MA$ implies there exists a clopen set $W\subset\omega\sp\ast$ and a continuous onto function $\psi_1:W\to X_1$ such that $\pi\circ\psi_1=\psi_0\!\!\restriction_W$.
\end{lemma}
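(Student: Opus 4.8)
The plan is to realize $X_1$ as a suitable "doubling" of $X_0$ above a clopen splitting of $\omega^*$, arranged to be compatible with $\psi_0$. Since $\pi\!\!\restriction_{V_i}:V_i\to X_0$ is a homeomorphism for $i<2$, the space $X_1$ is determined by the data $(X_0,V_0,V_1,\pi)$ together with the "gluing identification" of the two copies of $X_0$. The key point is that a clopen set $U\subset X_0$ pulls back to a clopen set $U^0\cup U^1\subset X_1$ (one piece in each $V_i$), and the Boolean algebra $\mathrm{Clop}(X_1)$ is generated over $\mathrm{Clop}(X_0)$ (embedded diagonally) by the single clopen set $V_0$. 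So the task reduces to: find a clopen $W\subset\omega^*$ and a clopen $W_0\subset W$ such that $\psi_0\!\!\restriction_W:W\to X_0$ is still onto and such that the map sending a basic clopen set $U^i$ of $X_1$ to $\psi_0^{\leftarrow}[U]\cap W\cap W_i$ (with $W_1=W\setminus W_0$) respects the finitely many Boolean relations that present $\mathrm{Clop}(X_1)$.

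First I would choose, by Theorem \ref{gaps} (as invoked via $\MA$ in the paragraph before the lemma), a nonempty clopen $W\subset\omega^*$ on which $\psi_0$ remains onto — concretely, pick for each point of a countable dense subset $D$ of $X_0$ a point of $\omega^*$ in its fiber, take a discrete-in-$\omega^*$ enumeration, and let $W$ be a clopen set containing their closure but still "small"; by separability of $X_0$ and continuity, $\psi_0[W]=X_0$ (one uses that the fibers are nonempty $G_\delta$'s hence regular closed, and that closures of countable sets in $\omega^*$ are $C^*$-embedded). Next, inside $W$, I would split $W=W_0\cup W_1$ into two clopen pieces so that $\psi_0\!\!\restriction_{W_i}:W_i\to X_0$ is still onto for each $i<2$ — again possible by the same $\MA$ fiber-preimage argument applied with a further splitting, the point being $|\mathrm{Clop}(X_0)|<\c$ so all the relevant intersections stay regular closed. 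Then define $\psi_1:W\to X_1$ by sending $p\in W_i$ to the unique point of $V_i$ with $\pi$-image $\psi_0(p)$; that is, $\psi_1(p)=(\pi\!\!\restriction_{V_i})^{-1}(\psi_0(p))$ when $p\in W_i$. By construction $\pi\circ\psi_1=\psi_0\!\!\restriction_W$ and $\psi_1$ is onto.

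The remaining work — and I expect this to be the main obstacle — is checking that $\psi_1$ is continuous. This is not automatic: $\psi_1$ is continuous on each of the two clopen pieces $W_0$, $W_1$ separately (being a composition of $\psi_0\!\!\restriction_{W_i}$ with a homeomorphism), so the only issue is at the "seam" — one must verify that the two partial maps agree on closures in the right way, i.e. that $\psi_1^{\leftarrow}[E]$ is clopen for each clopen $E\subset X_1$. Writing $E=(U^0\cap V_0)\cup(U'^1\cap V_1)$ for clopen $U,U'\subset X_0$, we get $\psi_1^{\leftarrow}[E]=(\psi_0^{\leftarrow}[U]\cap W_0)\cup(\psi_0^{\leftarrow}[U']\cap W_1)$, which is clopen automatically. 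So in fact continuity comes for free once one observes that every clopen subset of $X_1$ has this form — which is exactly the statement that $\mathrm{Clop}(X_1)$ is generated over the diagonal copy of $\mathrm{Clop}(X_0)$ by $V_0$, a consequence of $X_1=V_0\cup V_1$ with each $\pi\!\!\restriction_{V_i}$ a homeomorphism. Thus the genuine content is concentrated in the two $\MA$-applications that produce $W$ and its splitting $W_0\cup W_1$ with $\psi_0$ onto on each piece; the compatibility $\pi\circ\psi_1=\psi_0\!\!\restriction_W$ and surjectivity of $\psi_1$ are then immediate, and continuity follows from the clopen-algebra description of $X_1$. I would close by remarking that weight $<\c$ of $X_0$ and $X_1$ is used precisely to keep the relevant pre-images regular closed so that Theorem \ref{gaps} applies.
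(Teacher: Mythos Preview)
There is a genuine gap. The hypothesis says that $\pi\!\!\restriction_{V_i}:V_i\to X_0$ is an \emph{embedding}, not a homeomorphism onto $X_0$; its image $F_i=\pi[V_i]$ may be a proper closed subset of $X_0$. Your construction asks for a clopen splitting $W=W_0\cup W_1$ with $\psi_0[W_i]=X_0$ and then sets $\psi_1(p)=(\pi\!\!\restriction_{V_i})^{-1}(\psi_0(p))$ for $p\in W_i$. But $(\pi\!\!\restriction_{V_i})^{-1}$ is only defined on $F_i$, so $\psi_1(p)$ is undefined whenever $\psi_0(p)\in X_0\setminus F_i$; since you arranged $\psi_0[W_i]=X_0$, such points will exist as soon as $F_i\neq X_0$. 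This is not a cosmetic issue: in the application in Theorem~\ref{MAthm} one has $X_1=X_{\alpha+1}\subset{}^{\alpha+1}2$, $V_i=\{x\in X_{\alpha+1}:x(\alpha)=i\}$, and $\pi=\pi^{\alpha+1}_\alpha$, so $F_i$ is the set of points of $X_\alpha$ admitting an extension with $\alpha$-th coordinate $i$, which need not be all of $X_\alpha$.

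The paper's proof addresses exactly this point. One first sets $F_i=\pi[V_i]$ and then, using $\MA$ and Theorem~\ref{gaps}, builds clopen sets $W_0,W_1\subset\omega^\ast$ with $W_i\subset\psi_0^{\leftarrow}[F_i]$ (so that the inverse is defined) while also arranging $\psi_0[W_1]=F_1$ and $\psi_0[W_0\setminus W_1]=F_0$; then with $W=W_0\cup W_1$ one defines $\psi_1$ by $(\pi\!\!\restriction_{V_0})^{-1}\circ\psi_0$ on $W_0\setminus W_1$ and $(\pi\!\!\restriction_{V_1})^{-1}\circ\psi_0$ on $W_1$. The delicate step is getting $\psi_0[W_0\setminus W_1]$ to still cover all of $F_0$ after $W_1$ is removed, which is why the paper picks its countable dense set inside $(X_0\setminus F_0)\cup(X_0\setminus F_1)\cup\inte[X_0]{F_0\cap F_1}$ and, for points in the overlap, reserves two disjoint clopen witnesses before applying Theorem~\ref{gaps}. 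Your outline is salvageable once you replace the target $X_0$ of each $W_i$ by the correct $F_i$ and add this bookkeeping; your continuity discussion then becomes unnecessary, since $\psi_1$ is visibly continuous on each piece of a clopen partition.
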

\begin{proof}
 Let $F_i=\pi[V_i]$ for $i<2$, this is a closed subset of $X_0$. Choose a countable dense set $\{d_n:n<\omega\}$ of $X_0$ that is contained in the dense open set $(X_0\setminus F_0)\cup(X_0\setminus F_1)\cup(\inte[X_0]{F_0\cap F_1})$. Let $N_i=\{n<\omega:d_n\in X_0\setminus F_i\}$ for $i<2$ and $N_2=\omega\setminus(N_0\cup N_1)$.
 
 Since $F_0$ is an intersection of $<\c$ many clopen sets of $X_0$, there is a collection $\mathcal{G}_0$ of clopen sets of $\omega\sp\ast$ such that $\bigcap\mathcal{G}_0=\psi_0\sp\leftarrow[F_0]$ and $|\mathcal{G}_0|<\c$. For each $n\in N_0\cup N_2$, let $U_n$ be a clopen set of $\omega\sp\ast$ such that $\psi_0[U_n]=\{d_n\}$. Clearly, $U_n\subset\bigcap\mathcal{G}_0$ for all $n\in N_0\cup N_2$. Thus, considering the collection $\mathcal{G}_0\cup\{U_n:n\in N_0\cup N_2\}$, by $\MA$ and Lemma \ref{gaps}, there exists an clopen set $W_0\subset\omega\sp\ast$ such that $W_0\subset\bigcap\mathcal{G}_0$ and $U_n\subset W_0$ for all $n\in N_0\cup N_2$. It follows that $W_0$ is a clopen set of $\omega\sp\ast$ such that $d_n\in \psi[W_0]$ for all $n\in N_0\cup N_2$. Since $\{d_n:n\in N_0\cup N_2\}$ is dense in $F_0$ we obtain that $\psi_0[W_0]=F_0$.
 
 Now we will find a clopen set $W_1$ with $\psi_0[W_1]=F_1$. However, we will have to be more careful because of possible intersections with $W_0$. Let $\mathcal{G}_1$ be a collection of clopen sets of $\omega\sp\ast$ such that $\bigcap\mathcal{G}_1=\psi_0\sp\leftarrow[F_1]$ and $|\mathcal{G}_1|<\c$. For each $n\in N_1$, let $U_n$ be a clopen subset of $\omega\sp\ast$ such that $\psi_0[U_n]=\{d_n\}$. If $n\in N_2$, we choose two disjoint non-empty clopen subsets $U_n\sp{0}$ and $U_n\sp{1}$ of $\omega\sp\ast$ such that $U_n\sp{0}\subset W_0$ and $\psi_0[U_n\sp{i}]=\{d_n\}$ for $i<2$. Clearly, $U_n\subset\bigcap\mathcal{G}_1$ for $n\in N_1$ and $U_n\sp{0}\cup U_n\sp{1}\subset \bigcap\mathcal{G}_1$ for $n\in N_2$. So using $\MA$ and Lemma \ref{gaps} again, we can find a clopen set $W_1\subset\omega\sp\ast$ such that $W_1\subset\bigcap\mathcal{G}_1$, $U_n\subset W_1$ for all $n\in N_1$, $U_n\sp{1}\subset W_1$ for all $n\in N_2$  and $U_n\sp{0}\cap W_1=\emptyset$ for all $n\in N_2$. Again, it easily follows that $\psi_0[W_1]=F_1$.
 
 So now consider $W_0\setminus W_1$. If $n\in N_1$, $U_n\subset W_0\setminus W_1$ so $d_n\in \psi_0[W_0\setminus W_1]$. If $n\in N_2$, $U_n\sp{0}\subset W_0\setminus W_1$ so $d_n\in \psi_0[W_0\setminus W_1]$. From this it follows that $\psi_0[W_0\setminus W_1]=F_0$. Let $W=W_0\cup W_1$, we now define $\psi_1:W\to X_1$ such that
 $$
 \psi_1(x)=\left\{
 \begin{array}{cl}
 (\pi\!\!\restriction_{V_0})\sp{-1}(\psi_0(x)),  & \textrm{ if }x\in W_0\setminus W_1,\textrm{ and}\\
 (\pi\!\!\restriction_{V_1})\sp{-1}(\psi_0(x)),  & \textrm{ if }x\in W_1.
 \end{array}
 \right.
 $$
 
 It is easy to see that $\psi_1$ is as required. 
\end{proof}

\begin{thm}\label{MAthm}
 Let $X$ be a separable, compact, ED space. Then $\MA$ implies that there is a reversible filter $\filt$ that is generated by a tower of height $\c$ such that $K_\filt$ is a $P$-set homeomorphic to $X$.
\end{thm}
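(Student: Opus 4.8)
The plan is to run a transfinite recursion of length $\c$ which is a tower‑generating, $\MA$‑driven refinement of the construction in Theorem \ref{weakPrev}. I may assume $X$ is infinite, the finite case being immediate. Since $X$ is separable and compact it has weight $\leq\c$, and being ED it is $0$‑dimensional; by a standard filtration of its clopen algebra $\mathbb{B}$ (which has size $\leq\c$), adjoining one generator at a time, one presents $X$ as an inverse limit $X=\varprojlim_{\alpha<\c}X_\alpha$ of a continuous inverse system of separable compact $0$‑dimensional spaces $X_\alpha$ of weight $<\c$ whose bonding maps $X_{\alpha+1}\to X_\alpha$ have the ``doubling'' form demanded in the hypothesis of Lemma \ref{lifting} (adjoining a generator $b$ to a Boolean algebra $\mathbb{D}$ gives $\mathbb{D}[b]\cong\mathbb{D}/I_0\times\mathbb{D}/I_1$ with $I_0\cap I_1=\{0\}$, and dually this is precisely such a map). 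The ingredients beyond Theorem \ref{weakPrev} are $\MA$ — giving $\b=\mathfrak t=\c$, the regularity of $\c$, and, through Theorem \ref{gaps}, that intersections of fewer than $\c$ clopen sets are regular closed — and Lemma \ref{lifting} itself, which is exactly the device for extracting clopen subsets of $\omega\sp\ast$ carrying prescribed continuous surjections onto separable spaces of weight $<\c$.

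I would recursively build a $\subseteq$‑decreasing sequence $\{W_\alpha:\alpha<\c\}$ of \emph{clopen} subsets of $\omega\sp\ast$, say $W_\alpha=A_\alpha\sp\ast$ with $A_\alpha\subseteq\omega$, together with continuous surjections $\psi_\alpha:W_\alpha\to X_\alpha$ coherent with the bonding maps. At a successor step, treating $W_\alpha$ as a copy of $\omega\sp\ast$, Lemma \ref{lifting} supplies a clopen $W_{\alpha+1}\subseteq W_\alpha$ and a coherent surjection $\psi_{\alpha+1}:W_{\alpha+1}\to X_{\alpha+1}$ (one is free to take $X_{\alpha+1}=X_\alpha$ when no advance is needed). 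The ``OK''‑steps of Theorem \ref{weakPrev} are dropped entirely: the set $K:=\bigcap_{\alpha<\c}W_\alpha$ is automatically a $P$‑set, since $\{W_\alpha:\alpha<\c\}$ is a decreasing $\c$‑chain of clopen neighbourhoods cofinal in the neighbourhood filter of $K$ and $\c$ is regular — equivalently, $\{A_\alpha:\alpha<\c\}$ is a tower of height $\c$ and so generates a $P_\c$‑filter. As in Theorem \ref{weakPrev}, I would interleave into the recursion one family of steps running through all clopen $B_\alpha\subseteq\omega\sp\ast$ to force $\psi:=\varprojlim_{\alpha<\c}\psi_\alpha:K\to X$ to be irreducible (as in condition (d) there), and another family running through all bijections $f_\alpha:\omega\to\omega$ to guarantee reversibility (as in condition (e) there): whenever $f_\alpha\sp\ast$ threatens to map $K$ properly into itself, a suitable doubling lets one split off a clopen piece of $W_\alpha$ and arrange that some fibre of $\psi$ is pushed off $K$ by $f_\alpha\sp\ast$, so that $f_\alpha\sp\ast[K]\subsetneq K$ is impossible; reversibility then follows via Lemma \ref{revremainder}, by the argument of Theorem \ref{weakPrev}.

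The delicate point is the limit stages. At a limit $\lambda<\c$ the set $R_\lambda:=\bigcap_{\alpha<\lambda}W_\alpha$ is an intersection of fewer than $\b=\c$ clopen sets, hence regular closed by Theorem \ref{gaps} but in general not clopen, and a finite‑intersection‑property argument shows that the coherent maps $\psi_\alpha$ patch to a continuous surjection of $R_\lambda$ onto $X_\lambda:=\varprojlim_{\alpha<\lambda}X_\alpha$. I need a \emph{clopen} $W_\lambda\subseteq R_\lambda$ carrying a coherent surjection onto $X_\lambda$; the corresponding $A_\lambda$ is then a pseudo‑intersection of $\{A_\alpha:\alpha<\lambda\}$, which exists since $\mathfrak t=\c$. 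Here the setup pays off: because $\c$ is regular, $X_\lambda$ still has weight $<\c$, which is exactly the regime in which $\MA$ and Theorem \ref{gaps} — the same machinery underlying the proof of Lemma \ref{lifting} — allow all of $X_\lambda$ to be captured in the image of a single clopen subset of the regular closed set $R_\lambda$: one lifts a countable dense subset of $X_\lambda$ to countably many small clopen pieces inside $R_\lambda$ and amalgamates them into one clopen set disjoint from the complements $\omega\sp\ast\setminus W_\alpha$ $(\alpha<\lambda)$, which, after passing to differences, form a pairwise disjoint family of size $<\b$ to which Theorem \ref{gaps} applies. I expect this limit step, together with keeping it compatible with the irreducibility and reversibility bookkeeping, to be the technical heart of the proof; the remainder transcribes the proofs of Theorem \ref{weakPrev} and Lemma \ref{lifting}.

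Finally I would assemble the conclusion. Put $K=\bigcap_{\alpha<\c}W_\alpha\neq\emptyset$ and let $\filt=\{A\subseteq\omega:K\subseteq A\sp\ast\}$ be the neighbourhood filter of $K$; this is precisely the filter generated by the tower $\{A_\alpha:\alpha<\c\}$, and the tower has no pseudo‑intersection, because a pseudo‑intersection $A$ would make $A\sp\ast$ a nonempty clopen subset of $K$ homeomorphic to $\omega\sp\ast$ and open in $K$, which is impossible since $K\cong X$ is separable while $\omega\sp\ast$ is not — so $K$ is nowhere dense and $\filt$ is generated by a genuine tower of height $\c$. By the irreducibility steps, $\psi:K\to X$ is an irreducible continuous surjection onto an ED space, hence a homeomorphism by Gleason's theorem (compare the argument in \cite[1.4.7]{vanmillhandbook}); thus $K_\filt=K$ is a $P$‑set homeomorphic to $X$. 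Reversibility of $\filt$ follows from the reversibility steps, exactly as in the proof of Theorem \ref{weakPrev} via Lemma \ref{revremainder}.
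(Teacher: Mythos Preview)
Your outline is correct and matches the paper's approach almost exactly: the paper realizes the inverse system by embedding $X$ in ${}^{\c}2$ and taking $X_\alpha=\pi^{\c}_\alpha[X]$ (equivalent to your Boolean filtration), builds a decreasing $\c$-chain of clopen sets $K_\alpha\subset\omega^\ast$ with coherent surjections $\varphi_\alpha:K_\alpha\to X_\alpha$, uses Lemma~\ref{lifting} at successor steps, handles limits with $\MA$ and Theorem~\ref{gaps} precisely as you sketch (lift the dense points $d_n$ to clopen $U_n$ inside the regular-closed intersection and amalgamate), and concludes via Gleason and Lemma~\ref{revremainder}. Your observation that the $\c$-OK steps of Theorem~\ref{weakPrev} are simply dropped is exactly what the paper does, and your tower argument is equivalent to the paper's (``separable subsets of $\omega^\ast$ are nowhere dense'').

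One point deserves correction. You locate the technical heart at the limit stage and say the reversibility step ``transcribes'' from condition~(e) of Theorem~\ref{weakPrev}. In fact the limit stage is routine (your own sketch already proves it), while the reversibility step does \emph{not} transcribe: in Theorem~\ref{weakPrev} the splitting is supplied by the $(\c+1)^{\c}$ factor and the shrinking index set $I_\alpha$, machinery entirely absent here. The paper instead gives a hands-on argument using separability: given $U$, $V$ as in the hypothesis, one lifts each $d_n$ to a clopen $U_n\subset K_\alpha$, partitions $\omega$ into $N_0\cup N_1\cup N_2$ according to whether $d_n$ lies in $V$ or $U_n$ meets $f_\alpha^\ast[U]$, selects witness points $p_n$ (and preimages $q_n$ for $n\in N_1$) so that $\{p_n:n\in N_0\}\cup\{q_n:n\in N_1\}$ is discrete in $U$, separates them by a clopen $W\subset U$, and sets
\[
T=W\cup\bigl(K_\alpha\cap f_\alpha^\ast[U\setminus W]\bigr)\cup\bigl(\varphi_\alpha^{\leftarrow}[X\setminus V]\cap(K_\alpha\setminus f_\alpha^\ast[U])\bigr).
\]
This $T$ still surjects onto $X_\alpha$ but satisfies $f_\alpha^\ast[T\cap\varphi_\alpha^{\leftarrow}(x)]\cap T=\emptyset$ for any $x\in V$; one then applies Lemma~\ref{lifting} inside $T$. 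Your phrase ``a suitable doubling lets one split off a clopen piece'' does not supply this, and Lemma~\ref{lifting} alone is not the mechanism. This is the genuine technical content of the proof.
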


\begin{proof}
 By our hypothesis, we may assume that $X\subset{}\sp\c{2}$. For all pairs $\alpha\leq\beta\leq\c$, let $\pi_{\alpha}\sp\beta:{}\sp\beta{2}\to{}\sp\alpha{2}$ be the projection. Let $\{d_n:n<\omega\}$ be an enumeration of a countable dense set in $X$. By permuting the elements of $\c$ and then renaming them if necessary, we may assume that if $n,m<\omega$ and $\pi\sp\c_\omega(d_n)=\pi\sp\c_\omega(d_m)$, then $n=m$. Let $X_\alpha=\pi\sp\c_\alpha[X]$ for every $\alpha<\c$.
 
 We will recursively construct a decreasing sequence $\{K_\alpha:\omega\leq\alpha<\c\}$ of clopen sets of $\omega\sp\ast$ and a sequence of continuous functions $\varphi_\alpha:K_\alpha\to X_\alpha$, for $\omega\leq\alpha\leq\c$, in such a way that $\pi\sp\beta_\alpha\circ\varphi_\beta=\varphi_\alpha$ whenever $\omega\leq\alpha\leq\beta<\c$. Once we have done this, let $K=\bigcap\{K_\alpha:\alpha<\c\}$ and define $\varphi:K\to X$ by $\varphi(x)=\bigcup\{\varphi_\alpha(x):\omega\leq\alpha<\c\}$ for all $x\in K$. Notice that $\varphi$ is continuous and $\pi\sp\c_\alpha\circ\varphi=\varphi_\alpha$ for every $\omega\leq\alpha<\c$. 
 
 Let $\{B_\alpha:\omega\leq\alpha<\c\}$ be an enumeration of all clopen subsets of $\omega\sp\ast$. Let $\{f_\alpha:\omega\leq\alpha<\c\}$ be the collection of all bijections from $\omega$ onto itself such that each one is repeated cofinally often. Let $\Lambda_0$ be the set of infinite even ordinals $<\c$ and let $\Lambda_1$ be the set of infinite odd ordinals $<\c$. We will carry out our construction respecting the following conditions.
 
 \begin{itemize}
  \item[(a)] $K_\omega=\omega\sp\ast$.
  \item[(b)] For all $\omega\leq\alpha<\c$ and $n<\omega$, $\varphi_\alpha[K_\alpha]=X_\alpha$.
  \item[(c)] For all $\alpha\in\Lambda_0$, if $\varphi_\alpha[K_\alpha\cap B_\alpha]=X_\alpha$, then $K_{\alpha+1}\subset B_\alpha$.
  \item[(d)] Let $\alpha\in\Lambda_1$. Assume that there exists a clopen sets $U\subset K_\alpha$ and $V\subset X_\alpha$ such that $\varphi_\alpha[U]=V$ and $\varphi_\alpha[f_\alpha\sp\ast[U]]\subset X\setminus V$. Then there is $x\in X_\alpha$ such that $f_\alpha\sp\ast[K_{\alpha+1}\cap\varphi_\alpha\sp\leftarrow(x)]\cap K_{\alpha+1}=\emptyset$.
 \end{itemize}
 
 It is not hard to prove that (c) implies that $\varphi:K\to X$ is irreducible, thus, a homeomorphism. And the proof that (d) implies that the filter $\filt$ of neighborhoods of $K$ is reversible is analogous to the corresponding one in the proof of Theorem \ref{weakPrev}. Since any separable subspace of $\omega\sp\ast$ is nowhere dense, we obtain that $\{K_\alpha:\alpha<\c\}$ is a tower that generates $\filt$. So we will only show how to carry out this construction. 
 
 Let $\beta<\c$ be a limit ordinal, let us show how to find $K_\beta$ and $\varphi_\beta$. Let $T=\bigcap\{K_\alpha:\alpha<\beta\}$ and define $\psi:T\to X_\beta$ by $\psi(x)=\bigcup\{\varphi_\alpha(x):\omega\leq\alpha<\beta\}$ for all $x\in T$. Notice that $\psi$ is continuous and $\MA$ implies that $T$ is a regular closed set of $\omega\sp\ast$. Because $X_\beta$ has weight $\leq|\beta|<\c$, $\psi\sp\leftarrow[\pi_\beta\sp{\c}(d_n)]$ is an intersection of $<\c$ many clopen sets for each $n<\omega$. By $\MA$, we can choose for each $n<\omega$ a clopen set $U_n\subset T$ such that $\psi[U_n]=\{\pi_\beta\sp\c(d_n)\}$. Then by considering the sets $\{U_n:n<\omega\}\cup\{K_\alpha:\alpha<\beta\}$, by $\MA$ and Lemma \ref{gaps}, we obtain that there is a clopen set $V\subset T$ such that $U_n\subset V$ for every $n<\omega$. Let $K_\alpha=V$ and $\varphi_\beta=\psi\!\!\restriction_{V}$.
 
 Now assume that $\alpha<\c$ and we want to define $K_{\alpha+1}$ and $\varphi_{\alpha+1}$. First, assume that $\alpha\in\Lambda_0$. Let $T=K_\alpha\cap B_\alpha$ if $\varphi_\alpha[K_\alpha\cap B_\alpha]=X_\alpha$; otherwise, let $T=K_\alpha$. Then $\varphi_\alpha[T]=X_\alpha$. Notice that $V_0=\{x\in X_{\alpha+1}:x(\alpha)=i\}$ for $i<2$ is a pair of clopen sets of $X_{\alpha+1}$ where $\pi\sp{\alpha+1}_\alpha$ is one-to-one and $X_{\alpha+1}=V_0\cup V_1$. Thus, we can apply Lemma \ref{lifting} to find a clopen set $W\subset T$ and a continuous function $\psi:W\to X_{\alpha+1}$ such that $\pi\sp{\alpha+1}_\alpha\circ\psi=\varphi_\alpha$. So let $K_\alpha=W$ and $\varphi_{\alpha+1}=\psi$.

 Finally, assume that $\alpha\in\Lambda_1$. If the hypothesis of (d) does not hold, just use Lemma \ref{lifting} like in the previous paragraph to define $K_{\alpha+1}$ and $\varphi_{\alpha+1}$. So assume otherwise. By $\MA$ and the fact that all points of $X_\alpha$ have character $\leq|\alpha|<\c$, we may assume that for each $n<\omega$, there exists a clopen set $U_n\subset K_\alpha$ such that $\varphi_\alpha[U_n]=\{\pi\sp{c}_\alpha(d_n)\}$. Let $N_0$ be the set of $n<\omega$ such that $d_n\in V$. We may assume that $U_n\subset U$ for all $n\in N_0$. For $n\in\omega\setminus N_0$, we may assume that either $U_n\subset f_\alpha\sp\ast[U]$ or $U_n\cap f_\alpha\sp\ast[U]=\emptyset$. Let $N_1$ the set of all $n\in\omega\setminus N_0$ such that $U_n\subset f_\alpha\sp\ast[U]$ and $N_2=\omega\setminus(N_0\cup N_1)$.
 
 For each $n\in N_1$, choose $p_n\in U_n$. Then $\{p_n:n\in N_1\}$ is a discrete (possibly empty) set contained in $f_\alpha\sp\ast[U]$. For each $n\in N_1$, let $q_n=(f_\alpha\sp\ast)\sp\leftarrow(p_n)$. Then $\{q_n:n\in N_1\}$ is a discrete set contained in $U$. Since no clopen subset of $\omega\sp\ast$ is separable, it is possible to choose, for each $n\in N_0$, $p_n\in U_n\setminus\cl[\omega\sp\ast]{\{q_m:m\in N_1\}}$. Then the set $\{p_n:n\in N_0\}\cup\{q_n:n\in N_1\}$ is a dicrete subset of $U$. Then, since countable subsets are $C\sp\ast$-embedded in $\omega\sp\ast$, there exists a clopen set $W\subset U$ such that $\{p_n:n\in N_0\}\subset W$ and $\{q_n:n\in N_1\}\cap W=\emptyset$. With this, we can define
 $$
 T=W\cup (K_\alpha\cap f_\alpha\sp\ast[U\setminus W])\cup (\varphi_\alpha\sp\leftarrow[X\setminus V]\cap (K_\alpha\setminus f_\alpha\sp\ast[U]))
 $$
 
 Clearly, $T$ is a clopen subset of $K_\alpha$. If $n\in N_0$, then $p_n\in W$ so $d_n\in\varphi_\alpha[T]$. If $n\in N_1$, then $p_n\in K_\alpha\cap f_\alpha\sp\ast[U\setminus W]$ so $d_n\in\varphi_\alpha[T]$. Finally, if $n\in N_2$, $U_n\subset \varphi_\alpha\sp\leftarrow[X\setminus V]\cap (K_\alpha\setminus f_\alpha\sp\ast[U])$ so $d_n\in\varphi_\alpha[T]$. Thus, $\{d_n:n<\omega\}\subset\varphi_\alpha[T]$, which implies that $\varphi_\alpha[T]=X$.
 
 By an application of Lemma \ref{lifting}, there is a clopen set $T\sp\prime\subset T$ and a continuous function $\psi:T\sp\prime\to X_{\alpha+1}$ such that $\pi\sp{\alpha+1}_\alpha\circ\psi=\varphi_\alpha$. Let $K_{\alpha+1}=T\sp\prime$ and $\varphi_{\alpha+1}=\psi$. Finally, choose $x\in V$ arbitrarily. Since $K_{\alpha+1}\cap\varphi_\alpha\sp\leftarrow(x)\subset T\cap\varphi_\alpha\sp\leftarrow(x)\subset W$, $f_\alpha\sp\ast[K_{\alpha+1}\cap\varphi_\alpha\sp\leftarrow(x)]\subset f_\alpha\sp\ast[W]\subset \omega\sp\ast\setminus T\subset \omega\sp\ast\setminus K_{\alpha+1}$. Thus, these choices satisty the conclusion of (d), so we have finished the proof.
\end{proof}

\begin{ques}
 Is the conclusion of Theorem \ref{MAthm} still valid if $X$ is not necessarily separable?
\end{ques}

\section*{Acknowledgements}

We would like to thank the referee for his or her remarks and for pointing out a mistake in a previous version of the proof of Theorem \ref{MAthm}.


\begin{thebibliography}{99}

 \bibitem{bart} Bartoszy\'{n}ski, T. and Judah, H.; ``Set theory. On the structure of the real line.'' A K Peters, Ltd., Wellesley, MA, 1995. xii+546 pp. ISBN: 1-56881-044-X
 
 \bibitem{hered-revers} Chatyrko, V. A.; Han, S.-E.; Hattori, Y.; ``On hereditarily reversible spaces.'', preprint
 
 \bibitem{vdhandbook} van Douwen, E. K.; ``The integers and topology.'' Handbook of set-theoretic topology, 111--167, North-Holland, Amsterdam, 1984. 
 
 \bibitem{dow-BN} Dow, A.; ``$\beta N$.'' The work of Mary Ellen Rudin (Madison, WI, 1991), 47--66, Ann. New York Acad. Sci., 705, New York Acad. Sci., New York, 1993.
 
 \bibitem{dow-gubbi-szymanski} Dow, A.; Gubbi, A. V.; Szyma\'nski, A.; ``Rigid Stone spaces within ${\rm ZFC}$.'' Proc. Amer. Math. Soc. 102 (1988), no. 3, 745--748.
 
 \bibitem{filtspace1} Garc\'ia-Ferreira, S.; Uzc\'ategui, C.; ``Subsequential filters.'' Topology Appl. 156 (2009), no. 18, 2949--2959.
 
 \bibitem{filtspace2} Garc\'ia-Ferreira, S.; Uzc\'ategui, C.; ``The degree of subsequentiality of a subsequential filter.'' Topology Appl. 157 (2010), no. 14, 2180--2193.
 
 \bibitem{kunen-weakP}   Kunen, K.; ``Weak P-points in $N\sp{\ast}$''. Topology, Vol. II (Proc. Fourth Colloq., Budapest, 1978), pp. 741–749, Colloq. Math. Soc. J\'anos Bolyai, 23, North-Holland, Amsterdam-New York, 1980.
 
 \bibitem{vanmillhandbook} van Mill, J.; ``An introduction to $\beta\omega$.'' Handbook of set-theoretic topology, 503--567, North-Holland, Amsterdam, 1984.
 
 \bibitem{revers_first} Rajagopalan, M.; Wilansky, A.; ``Reversible topological spaces.'' J. Austral. Math. Soc. 6 1966 129--138.
 
 \bibitem{simon-indeplinkfam} Simon, P.; ``Applications of independent linked families.'' Topology, theory and applications (Eger, 1983), 561--580, Colloq. Math. Soc. János Bolyai, 41, North-Holland, Amsterdam, 1985. 
 
\end{thebibliography}
\end{document}